\theoremstyle{plain}
\newtheorem{theorem}{Theorem}[section]
\newtheorem{corollary}[theorem]{Corollary}
\newtheorem{lemma}[theorem]{Lemma}
\newtheorem{proposition}[theorem]{Proposition}
\theoremstyle{definition}
\newtheorem{definition}[theorem]{Definition}
\newtheorem{example}[theorem]{Example}
\DeclareMathOperator{\sgn}{sgn}
\title{Codes and shifted codes}
\author{J. T. Hird\\
\small Department of Mathematics\\[-0.8ex]
\small North Carolina State University, North Carolina, USA\\
\small \texttt{jthird@ncsu.edu}\\
\and
Naihuan Jing\\
\small Department of Mathematics\\[-0.8ex]
\small North Carolina State University, North Carolina, USA\\
\small \texttt{jing@math.ncsu.edu}\\
\and
Ernest Stitzinger\\
\small Department of Mathematics\\[-0.8ex]
\small North Carolina State University, North Carolina, USA\\
\small \texttt{stitz@math.ncsu.edu}\\
}
\date{September 6, 2011}
\begin{document}
\maketitle

\begin{abstract}
The action of the Bernstein operators on Schur functions was given in terms of {\it codes} in \cite{cg} and extended to the analog in Schur Q-functions in \cite{HJS}.  We define a new combinatorial model of
extended codes and show that both of these results follow from a
natural combinatorial relation induced on codes. The new
algebraic structure provides a natural setting for Schur functions indexed by compositions.
%
%
\end{abstract}

\section{Introduction}\label{sec:intro}

In a recent paper, Carrell and Goulden found a formula for the action of $B_n$ on any Schur function in terms of the code of a partition.  In this paper we define a combinatorial model of codes
and show that the commutation relation satisfied by the Bernstein operators induces a natural relation on codes.  We then show that this relation implies Carrell and Goulden's formula as well as a formula for the action Bernstein operators in any order.  This provides a natural generalization
of Schur functions to be indexed by compositions and we use this to prove the analog of Bernstein's theorem in this setting.  We also show the analogous statements for Schur $Q$-functions and the operator $Y_n$ using both codes and {\it shifted codes} of partitions and compare these results to those for Bernstein operators.

Bernstein defined the operators $B(t)$ and $B_n$ on the ring of symmetric functions $\Lambda = \mathbb{C}[p_1, p_2, p_3, \ldots]$ by
$$B(t)=\sum_{n\in \mathbb{Z}}B_nt^n=exp\left(\sum_{k\geq1}\frac{t^k}{k}p_k\right)
exp\left(-\sum_{k\geq1}t^{-k}\frac{\partial}{\partial p_k}\right).
$$
He showed the following two results, often referred to as Bernstein's Theorem \cite{zelevinsky}:
\begin{equation}\label{introcomm}
B_n B_m = - B_{m-1} B_{n+1},
\end{equation}
\begin{equation}\label{introbernthm}
s_\lambda = B_{\lambda_1} B_{\lambda_2} \cdots B_{\lambda_l}.1,
\end{equation}
where $s_\lambda$ is the Schur polynomial indexed by the partition $\lambda = (\lambda_1, \lambda_2, \ldots, \lambda_l)$.  For convenience we also write $B_\mu$ for $B_{\mu_1}B_{\mu_2} \cdots B_{\mu_l}$ for
any composition $\mu $. 

The code of a partition $\lambda$ is defined to be the sequence of letters $R$ and $U$ obtained by tracing right and up along the outside edge of the Young diagram of shape $\lambda$ in the fourth quadrant.  See example \ref{defcode}.  Codes are closely related to Maya diagrams \cite{DJKMO}, one of the oldest combinatorial descriptions of partitions.  Similar structures have also been used in Okounkov's study of random matrices \cite{okounkov}.  Carrell and Goulden showed that
\begin{equation}\label{introcg}
B(t)s_\lambda=\sum_{i\geq1}(-1)^{|\lambda|-|\lambda^{(i)}|+i-1}t^{|\lambda^{(i)}|-|\lambda|}
s_{\lambda^{(i)}},
\end{equation}
where $\lambda^{(i)}$ is a particular partition defined in terms of the code of $\lambda$ \cite{cg}.

We extend the above model of codes to allow a new {\it left} move.
 In the extended model a composition $\mu$ is the sequence of letters $R$, $L$, and $U$ obtained in the same way using the Young diagram of shape $\mu$, including steps right, left, and up.  Using (\ref{introcomm}) we can define an equivalence relation on the codes of compositions by equality of their Bernstein operators.  Using this identification we can find a simple formula for $B_\mu$ using only the code of $\mu$.  In the same spirit the formula (\ref{introcg}) follows easily.

Schur functions can be defined for any composition as indicated by (\ref{introcomm}).
Our new combinatorial
model of codes provides a new explanation for the reason behind this.  Using the aforementioned results we can state the analog of Bernstein's Theorem (\ref{introbernthm}) for Schur functions indexed by compositions.  This shows the relationship between Schur functions and our relation on codes.

In \cite{HJS} we showed the analog of Carrell and Goulden's formula for the vertex operator $Y_n$ defined in \cite{j1} on any Schur $Q$-function both in terms of codes and in terms of {\it shifted codes} of partitions.  We show how these results follow from the relation induced on either codes or shifted codes by the commutation relation satisfied by $Y_n$ and show the similarities between these two approaches and the corresponding approach for Bernstein operators.

\section{Partitions and Code Models}\label{sec:codes}

A partition $\lambda = (\lambda_1, \lambda_2, \ldots, \lambda_l)$ of $n$ is a sequence of nonnegative integers satisfying $\lambda_1 \geq \lambda_2 \geq \ldots \geq \lambda_l$ whose sum is $n$.  A Young diagram of shape $\lambda$ is an array of left aligned boxes with $\lambda_i$ boxes in the $i^{\text{th}}$ row from the top.

\begin{definition}
Define the \textit{code} of a partition $\lambda$ to be the doubly infinite sequence of letters $R$ and $U$ obtained by tracing along the bottom-right edge of the Young diagram of shape $\lambda$ in the fourth quadrant of the $xy$-plane together with the negative $y$- and positive $x$-axes, where $R$ corresponds to a unit right step and $U$ corresponds to a unit up step.
\end{definition}

\begin{example}\label{defcode}
For the partition $\lambda = (4, 2, 2, 1)$, the path described above is shown in bold below.
\begin{center}
\setlength{\unitlength}{.8cm}
\begin{picture}(6,6) 
\linethickness{0.075mm}
\put(0,6){\line(1,0){6}}
\put(0,5){\line(1,0){4}}
\put(0,4){\line(1,0){2}}
\put(0,3){\line(1,0){2}}
\put(0,2){\line(1,0){1}}
\put(0,6){\line(0,-1){6}}
\put(1,6){\line(0,-1){4}}
\put(2,6){\line(0,-1){3}}
\put(3,6){\line(0,-1){1}}
\put(4,6){\line(0,-1){1}}
\linethickness{.5mm}
\put(0,2){\line(0,-1){2}}
\put(0,2){\line(1,0){1}}
\put(1,3){\line(0,-1){1}}
\put(1,3){\line(1,0){1}}
\put(2,5){\line(0,-1){2}}
\put(2,5){\line(1,0){2}}
\put(4,6){\line(0,-1){1}}
\put(4,6){\line(1,0){2}}
\end{picture}
\end{center}
Then the code of $\lambda$ is given by $\alpha = \ldots UUURURUURRURRR\ldots$
\end{example}

We now introduce an extended code model.
\begin{definition}
Define the \textit{code} of a composition $\mu$ to be the doubly infinite sequence of letters $R$, $L$, and $U$ obtained by tracing along the rightmost edge of the Young diagram of shape $\lambda$ in the fourth quadrant of the $xy$-plane together with the negative $y$- and positive $x$-axes, where $R$ corresponds to a unit right step, $L$ corresponds to a unit left step and $U$ corresponds to a unit up step.
\end{definition}

Note that the code of a composition $\mu$ will contain $L$'s exactly when $\mu$ has an exceedance, $\mu_i < \mu_{i+1}$.

\begin{example}
For the composition $\mu = (2, 3, 1, 4)$, the path described above is shown in bold below.
\begin{center}
\setlength{\unitlength}{.8cm}
\begin{picture}(6,6) 
\linethickness{0.075mm}
\put(0,6){\line(1,0){6}}
\put(0,5){\line(1,0){3}}
\put(0,4){\line(1,0){3}}
\put(0,3){\line(1,0){4}}
\put(0,2){\line(1,0){4}}
\put(0,6){\line(0,-1){6}}
\put(1,6){\line(0,-1){4}}
\put(2,6){\line(0,-1){2}}
\put(2,3){\line(0,-1){1}}
\put(3,5){\line(0,-1){1}}
\put(3,3){\line(0,-1){1}}
\put(4,3){\line(0,-1){1}}
\linethickness{.5mm}
\put(0,2){\line(0,-1){2}}
\put(0,2){\line(1,0){4}}
\put(4,3){\line(0,-1){1}}
\put(1,3){\line(1,0){3}}
\put(1,4){\line(0,-1){1}}
\put(1,4){\line(1,0){2}}
\put(3,5){\line(0,-1){1}}
\put(2,5){\line(1,0){1}}
\put(2,6){\line(0,-1){1}}
\put(2,6){\line(1,0){4}}
\end{picture}
\end{center}
Then the code of $\mu$ is given by $\alpha = \ldots UUURRRRULLLURRULURRR\ldots$
\end{example}

We will often write codes multiplicatively.  For instance, we might write $R^4$ rather than $RRRR$ in a code.  As codes in this setting are a special type of word, we also use the terminology \textit{prefix}, \textit{suffix}, and \textit{subword} in the standard way.

The Bernstein operators $B(t)$ and $B_n$ are defined on the ring of symmetric functions $\Lambda=\mathbb C[p_1, p_2, p_3, \ldots]$ by
$$B(t)=\sum_{n\in \mathbb{Z}}B_nt^n=exp\left(\sum_{k\geq1}\frac{t^k}{k}p_k\right)
exp\left(-\sum_{k\geq1}t^{-k}\frac{\partial}{\partial p_k}\right),
$$
where $p_k$ is the $k^{\text{th}}$ power sum symmetric function $p_k = \sum_{i\geq 1} x_i^k$.  Bernstein showed the following two relations for these operators, often referred to as Bernstein's Theorem:
\begin{eqnarray}\label{berncomm}
B_nB_m=-B_{m-1}B_{n+1}
\end{eqnarray}
\begin{eqnarray}\label{bernschur}
s_\lambda = B_{\lambda_1}B_{\lambda_2}\cdots B_{\lambda_l}\cdot1
\end{eqnarray}
where $\lambda = (\lambda_1, \lambda_2, \ldots, \lambda_l)$ is a partition, and $s_\lambda$ is the Schur polynomial indexed by $\lambda$.  For convenience we will write $B_\mu$ for $B_{\mu_1}B_{\mu_2} \cdots B_{\mu_l}$ for any composition $\mu$.

\section{A Relation on Codes}\label{sec:codereln}

Define an equivalence relation $\sim$ on the set of signed codes of compositions by $\alpha \sim \pm\beta$ if and only if $B_{\mu_1}B_{\mu_2} \cdots B_{\mu_l} = \pm B_{\nu_1}B_{\nu_2} \cdots B_{\nu_l}$, and $\alpha \sim 0$ if and only if $B_{\mu_1}B_{\mu_2} \cdots B_{\mu_l} = 0$, where $\alpha$ is the code of $\mu$ and $\beta$ is the code of $\nu$.  This is an equivalence relation since there is a one to one correspondence between a composition $\mu$ and its code $\alpha$.  For convenience of notation, we will write $\alpha_1 \sim \pm\beta_1$ if $\alpha_1$ is a subsequence of $\alpha$ and $\beta_1$ is a subsequence of $\beta$ such that $\alpha_0 \alpha_1 \alpha_2 = \alpha \sim \pm\beta = \pm\beta_0 \beta_1 \beta_2$ where $\alpha_0 = \beta_0$ and $\alpha_2 = \beta_2$.


\begin{proposition}\label{codeprop}
For any positive integer $k$, we have
\begin{eqnarray}
R\left(L^kUR^{k-1}\right) &\sim& \left(L^{k-1}UR^{k-2}\right)R\label{codereln1}\\
U\left(L^kUR^{k-1}\right) &\sim& -\left(L^{k-1}UR^{k-2}\right)U.\label{codereln2}
\end{eqnarray}
\end{proposition}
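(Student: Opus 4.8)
The plan is to translate every code into its associated product of Bernstein operators and then read the two relations directly off the commutation relation (\ref{berncomm}). The dictionary I would use is this: tracing a code from left to right (equivalently, tracing the Young diagram from bottom to top), keep a running horizontal coordinate that increases by $1$ on each $R$, decreases by $1$ on each $L$, and is unchanged on each $U$; then each $U$ occurring at coordinate $c$ contributes a factor $B_c$, and because the parts of $\mu$ are read from the top of the diagram downward, the $U$'s taken from right to left spell out $B_{\mu_1}B_{\mu_2}\cdots B_{\mu_l}$. This is exactly the rule that recovers $(2,3,1,4)$ from its code in the example above, and it makes sense for an arbitrary word since $B_c$ is defined for every $c\in\mathbb{Z}$.

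Next I would reduce each relation to a purely local identity. Since $\sim$ is applied to subwords with a common prefix $\alpha_0=\beta_0$ and common suffix $\alpha_2=\beta_2$, I only need to compare the operators produced inside the window. Writing $x_0$ for the horizontal coordinate at which the window begins (this is the net $R$-minus-$L$ count of $\alpha_0$, the same on both sides), I would first verify that the two candidate windows have the same net horizontal displacement and the same number of $U$'s, so that the suffix $\alpha_2$ resumes at an identical coordinate and contributes identical operators on both sides. For (\ref{codereln1}) both windows displace by $0$ and contain one $U$; for (\ref{codereln2}) both displace by $-1$ and contain two $U$'s. Hence in each case the relation collapses to an identity among the operators coming from the $U$'s inside the window, and this identity is required for every integer $x_0$, which is legitimate precisely because (\ref{berncomm}) holds for all integer indices.

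For (\ref{codereln1}) the computation is immediate: in $RL^kUR^{k-1}$ the unique $U$ sits at coordinate $x_0+1-k$, and in $L^{k-1}UR^{k-1}$ it again sits at $x_0+1-k$, so both windows contribute the single factor $B_{x_0+1-k}$ in the same slot of the product. Equivalently, the two words differ only by a leading $RL$, a horizontal excursion that creates no $U$ and has zero net displacement, so the full operator products are literally equal and $\alpha_1\sim\beta_1$. For (\ref{codereln2}), in $UL^kUR^{k-1}$ the two $U$'s lie at coordinates $x_0$ and $x_0-k$, so reading right to left gives $B_{x_0-k}B_{x_0}$; in $L^{k-1}UR^{k-2}U$ the two $U$'s lie at $x_0-k+1$ and $x_0-1$, giving $B_{x_0-1}B_{x_0-k+1}$. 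Setting $n=x_0-k$ and $m=x_0$ in (\ref{berncomm}) yields $B_{x_0-k}B_{x_0}=-B_{x_0-1}B_{x_0-k+1}$, which is exactly $\alpha_1\sim-\beta_1$.

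The step I expect to be most delicate is the bookkeeping in the dictionary rather than any single identity: I must get the right-to-left ordering of the $U$'s correct so that the noncommuting $B$'s appear in the order matching $B_{\mu_1}\cdots B_{\mu_l}$, and I must confirm that replacing the window leaves every operator outside it untouched. Once the displacement-and-count check guarantees this context-independence, (\ref{codereln2}) is a single instance of Bernstein's commutation relation with gap $m-n=k$, and (\ref{codereln1}) is the trivial $RL$-cancellation. I would also record the degenerate case $k=1$, where $R^{k-2}=R^{-1}$ is read formally as one left step (so that $R^{-1}R$ cancels in (\ref{codereln1})); there (\ref{codereln2}) specializes to $B_{x_0-1}B_{x_0}=-B_{x_0-1}B_{x_0}$, i.e. the vanishing $B_{x_0-1}B_{x_0}=0$, the familiar consequence of (\ref{berncomm}).
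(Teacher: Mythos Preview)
Your argument is correct and rests on the same two ingredients as the paper's: relation~(\ref{codereln1}) is the trivial $RL$ cancellation, and relation~(\ref{codereln2}) is a single instance of (\ref{berncomm}). The execution differs slightly. The paper's primary proof of (\ref{codereln2}) first reads off the intermediate identity $R^kUL^kU\sim -RUR^{k-2}UL^{k-1}$ from the picture for $\mu=(n,m)\to(m-1,n+1)$ and then multiplies by $L^k$ on the left and $R^{k-1}$ on the right to reach (\ref{codereln2}); it also sketches, immediately afterward, the more direct argument via the altered path that starts and ends at fixed points. Your coordinate dictionary is exactly this direct approach made explicit: you track the horizontal position, record the $U$-coordinates, and check that (\ref{berncomm}) with $n=x_0-k$, $m=x_0$ gives the required sign. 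This buys you a cleaner verification that the window is context-independent (your displacement-and-$U$-count check), whereas the paper's picture makes the geometric meaning more transparent. Your handling of the degenerate case $k=1$ (reading $R^{-1}$ as a formal left step, yielding $ULU\sim 0$) matches the paper's remark following the proof.
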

\begin{proof}
To prove relation (\ref{codereln1}), notice that $LR \sim RL \sim \phi,$ the empty set of no letters.  In other words, any consecutive $L$'s and $R$'s will cancel since they leave the path and the composition unchanged.  The proof is then immediate.  Throughout this paper we will assume that all codes have been reduced, meaning any possible $LR$ or $RL$ cancellations have already been made.

Relation (\ref{codereln2}) is actually a version the commutation relation (\ref{berncomm}) in terms of codes.  Consider the composition $\mu = (n, m),$ where $n < m$.  Using the equation $B_nB_m = - B_{m-1}B_{n+1}$, we get the change in the code of $\mu$ shown below, with the altered path in bold.
\begin{center}
{\Yvcentermath1 $
\setlength{\unitlength}{.8cm}
\begin{picture}(7,1.5) 
\linethickness{0.075mm}
\put(0,1.1){\makebox(3,1)[b]{$n$}}
\put(0,-2.2){\makebox(7,1)[t]{$m$}}
\put(3,.15){\makebox(4,1)[b]{$k$}}
\put(0,0){\makebox(3,1){$\cdots$}}
\put(0,-1){\makebox(3,1){$\cdots$}}
\put(3,-1){\makebox(4,1){$\cdots$}}
\put(0,1){\line(1,0){3}}
\put(0,0){\line(1,0){7}}
\put(0,-1){\line(1,0){7}}
\put(0,1){\line(0,-1){2}}
\put(3,1){\line(0,-1){2}}
\put(4,0){\line(0,-1){1}}
\put(6,0){\line(0,-1){1}}
\put(7,0){\line(0,-1){1}}
\linethickness{.5mm}
\put(3,1){\line(0,-1){1}}
\put(3,0){\line(1,0){4}}
\put(7,0){\line(0,-1){1}}
\put(3,-1){\line(1,0){4}}
\put(3,1){\circle*{.18}}
\put(3,-1){\circle*{.18}}
\end{picture}
\qquad
\rightarrow
\qquad
\begin{picture}(6,1.5)
\linethickness{0.075mm}
\put(0,1.1){\makebox(6,1)[b]{$m-1$}}
\put(0,-2.1){\makebox(4,1)[t]{$n+1$}}
\put(4,-1.1){\makebox(2,1)[t]{$k-2$}}
\put(0,0){\makebox(3,1){$\cdots$}}
\put(0,-1){\makebox(3,1){$\cdots$}}
\put(4,0){\makebox(2,1){$\cdots$}}
\put(0,1){\line(1,0){6}}
\put(0,0){\line(1,0){6}}
\put(0,-1){\line(1,0){4}}
\put(0,1){\line(0,-1){2}}
\put(3,1){\line(0,-1){2}}
\put(4,1){\line(0,-1){2}}
\put(6,1){\line(0,-1){1}}
\linethickness{.5mm}
\put(3,1){\line(1,0){3}}
\put(6,1){\line(0,-1){1}}
\put(4,0){\line(1,0){2}}
\put(4,0){\line(0,-1){1}}
\put(3,-1){\line(1,0){1}}
\put(3,1){\circle*{.18}}
\put(3,-1){\circle*{.18}}
\end{picture}
$}
\end{center}

\vspace{1cm} 

Since the bold path above begins and ends at the same point at the same point horizontally, and since the codes of these two compositions are related from the definition of $\sim$, we have that
\begin{equation}\label{codereln0}
R^kUL^kU \sim -RUR^{k-2}UL^{k-1}.
\end{equation}


Now multiply both sides of relation (\ref{codereln0}) by $L^k$ on the left and by $R^{k-1}$ on the right and again use the fact that $LR \sim RL \sim \phi$ to obtain (\ref{codereln2}).
If we insert the prefix $L^k$ before both of the subsequences in (\ref{codereln0}) and cancel any consecutive $LR$'s, we get exactly identity (\ref{codereln2}).
\end{proof}
We can also prove relation (\ref{codereln2}) directly from the commutation identity of the Bernstein functions (\ref{berncomm}) by following the bold line below.
\begin{center}
{\Yvcentermath1 $
\setlength{\unitlength}{.8cm}
\begin{picture}(7,1.5) 
\linethickness{0.075mm}
\put(0,1.1){\makebox(3,1)[b]{$n$}}
\put(0,-2.2){\makebox(7,1)[t]{$m$}}
\put(3,.15){\makebox(4,1)[b]{$k$}}
\put(3,1.1){\makebox(3,1)[b]{$k-1$}}
\put(0,0){\makebox(3,1){$\cdots$}}
\put(0,-1){\makebox(3,1){$\cdots$}}
\put(3,-1){\makebox(4,1){$\cdots$}}
\put(0,1){\line(1,0){3}}
\put(0,0){\line(1,0){7}}
\put(0,-1){\line(1,0){7}}
\put(0,1){\line(0,-1){2}}
\put(3,1){\line(0,-1){2}}
\put(4,0){\line(0,-1){1}}
\put(6,0){\line(0,-1){1}}
\put(7,0){\line(0,-1){1}}
\linethickness{.5mm}
\put(3,1){\line(1,0){3}}
\put(3,1){\line(0,-1){1}}
\put(3,0){\line(1,0){4}}
\put(7,0){\line(0,-1){1}}
\put(6,1){\circle*{.18}}
\put(7,-1){\circle*{.18}}
\end{picture}
\qquad
\rightarrow
\qquad
\begin{picture}(7,1.5)
\linethickness{0.075mm}
\put(0,1.1){\makebox(6,1)[b]{$m-1$}}
\put(0,-2.1){\makebox(4,1)[t]{$n+1$}}
\put(4,-1.1){\makebox(2,1)[t]{$k-2$}}
\put(4,-2.15){\makebox(3,1)[t]{$k-1$}}
\put(0,0){\makebox(3,1){$\cdots$}}
\put(0,-1){\makebox(3,1){$\cdots$}}
\put(4,0){\makebox(2,1){$\cdots$}}
\put(0,1){\line(1,0){6}}
\put(0,0){\line(1,0){6}}
\put(0,-1){\line(1,0){4}}
\put(0,1){\line(0,-1){2}}
\put(3,1){\line(0,-1){2}}
\put(4,1){\line(0,-1){2}}
\put(6,1){\line(0,-1){1}}
\multiput(7,1)(0,-.2){10}{\line(0,-1){.1}} 
\multiput(7,1)(-.2,0){5}{\line(-1,0){.1}} 
\multiput(7,0)(-.2,0){5}{\line(-1,0){.1}} 
\multiput(6,0)(0,-.2){5}{\line(0,-1){.1}} 
\linethickness{.5mm}
\put(6,1){\line(0,-1){1}}
\put(4,0){\line(1,0){2}}
\put(4,0){\line(0,-1){1}}
\put(4,-1){\line(1,0){3}}
\put(6,1){\circle*{.18}}
\put(7,-1){\circle*{.18}}
\end{picture}
$}
\end{center}

\vspace{1cm}

In this construction our relation on codes in (\ref{codereln2}) gives exactly the path along the rightmost edge of the diagram for $\mu = (n,m)$ from the bottom-right corner of the bottom row to the point two units up and one unit left of the starting position.  The subsequence on the right-hand side of (\ref{codereln2}) is exactly the path along the rightmost edge of the diagram for $\nu = (m-1, n+1)$ which starts and ends at those same points.  So relation (\ref{codereln2}) follows directly from the fact that the Bernstein functions indexed by these two compositions are related by (\ref{berncomm}).

This interpretation is less intuitive than the original construction, but it serves to show the deep connection that still exists between this relation on codes and the commutation relation of the Bernstein functions.

Notice that the special case $k=1$ in (\ref{codereln2}) gives that
$$ULU = U(L^1UR^0) \sim -(L^0UR^{-1})U = -ULU,$$
so $ULU \sim 0,$ since any composition whose code contains this subword must contain a subsequence $(n+1, n)$ and $B_{n+1}B_n = -B_nB_{n+1} = 0$.

\section{Codes of Compositions}\label{sec:compos}

We now want to use the relation on codes of compositions we developed in Section \ref{sec:codereln} to study $B_\mu$ where $\mu$ is a composition.

\begin{lemma}\label{SameSum}
Suppose that the codes $\alpha$ and $\beta$ of two compositions $\mu$ and $\nu$ differ only by one of the relations (\ref{codereln1}) or (\ref{codereln2}).  Then $\mu$ and $\nu$ have the same number of components, $l$, and the same sum, $\mu_1+\mu_2+\cdots +\mu_l = \nu_1+\nu_2+\cdots +\nu_l$.
\end{lemma}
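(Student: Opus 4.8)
The plan is to read off directly what each of the two relations does to the underlying composition and then check that neither the number of parts nor the total size changes. Recall from the proof of Proposition~\ref{codeprop} that every code is assumed reduced (all $LR$ and $RL$ cancellations performed), and that relation~(\ref{codereln1}) is literally the trivial identity $R(L^kUR^{k-1}) \sim (L^{k-1}UR^{k-2})R$ obtained by cancelling the outermost $L$ with the $R$ to its left, while relation~(\ref{codereln2}) is the coded form of the Bernstein commutation $B_nB_m = -B_{m-1}B_{n+1}$ applied to a pair of consecutive parts with $n<m$.

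First I would handle~(\ref{codereln1}). Both sides have the same total multiset of letters --- each side contains exactly $k-1$ copies of $L$, one $U$, and $k-1$ copies of $R$, just in a different order --- and, crucially, the number of $U$'s is unchanged and the difference (number of $R$'s) $-$ (number of $L$'s) is unchanged. Since the number of parts $l$ of a composition equals the number of $U$'s in its (reduced) code, and since the $x$-extent of the diagram (equivalently $\mu_1+\cdots+\mu_l$, since that is the total horizontal displacement of the path) equals (number of $R$'s) $-$ (number of $L$'s), both $l$ and the sum are preserved. The only subtlety is that after the substitution one must re-reduce the code; but re-reducing only removes matched $LR$/$RL$ pairs, which changes neither the $U$-count nor the signed $R$-minus-$L$ count, so the conclusion survives reduction.

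For~(\ref{codereln2}) I would argue the same way at the level of the local picture drawn in the proof of the proposition, or equivalently from~(\ref{codereln0}): the substitution sends the two consecutive parts $(n,m)$ with $n<m$ to the two parts $(m-1,n+1)$. This replaces two parts by two parts, so $l$ is unchanged, and $(m-1)+(n+1) = n+m$, so the sum is unchanged; everything outside these two rows of the diagram is untouched. Again, translating to codes, the local segment $R^kUL^kU$ is replaced by $RUR^{k-2}UL^{k-1}$ (before re-reduction): each has exactly two $U$'s and each has signed count $R$-minus-$L$ equal to $k+k-k = k$ on the left and $1+(k-2)-(k-1) = 0$...

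wait, let me recompute: on the left $R^kUL^kU$ has $k$ $R$'s and $k$ $L$'s, net $0$; on the right $RUR^{k-2}UL^{k-1}$ has $1+(k-2) = k-1$ $R$'s and $k-1$ $L$'s, net $0$. So the net horizontal displacement of the altered segment is $0$ on both sides, matching the fact noted in the proof that "the bold path begins and ends at the same point horizontally." Hence the total $x$-extent of the whole diagram is unchanged, i.e. the sum is preserved, and the $U$-count is unchanged, i.e. $l$ is preserved.

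The main obstacle --- really the only thing requiring care --- is bookkeeping around reduction: one must make sure that the passage from the raw substituted word back to its reduced form, and the fact that the substituted segment may now interact (cancel) with the fixed prefix $\alpha_0$ or suffix $\alpha_2$, does not spoil the count. This is handled uniformly by the observation that the two invariants I am tracking --- the number of $U$'s, and (number of $R$'s) $-$ (number of $L$'s) --- are each manifestly unchanged by inserting or deleting an adjacent $LR$ or $RL$ pair, and these two invariants compute exactly $l$ and $\mu_1+\cdots+\mu_l$ respectively for any composition. So once both relations are checked to preserve these invariants on the (possibly unreduced) local segment, the lemma follows immediately. I would close by remarking that this makes the relation $\sim$ genuinely a relation on compositions of a fixed size into a fixed number of parts, which is what is needed for the analysis of $B_\mu$ in the sequel.
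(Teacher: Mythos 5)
Your core argument is the paper's: for (\ref{codereln1}) the two sides describe the identical reduced path $L^{k-1}UR^{k-1}$, so in fact $\mu=\nu$ and there is nothing to check; for (\ref{codereln2}) the changed subword has the same net displacement on both sides (one unit left, two units up), so only the two parts recorded by the two $U$'s inside it can change, and those go from $(n,m)$ to $(m-1,n+1)$, which preserves both the number of parts and the sum. That part of your write-up is correct and is exactly the published proof.

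The problem is the ``uniform invariant'' you then lean on to dispose of the re-reduction bookkeeping. You assert that $\mu_1+\cdots+\mu_l$ equals (number of $R$'s) minus (number of $L$'s) in the code, identifying it with the total horizontal displacement of the path. The sum is not the total horizontal displacement: that quantity is $\mu_1$, the $x$-coordinate where the finite part of the path ends, whereas $\mu_1+\cdots+\mu_l$ is the area under the path. For $\mu=(2,1)$ the finite part of the code is $RURU$, with two $R$'s and no $L$'s, while $|\mu|=3$. More to the point, the sum is not a function of letter counts at all: the segments $RU$ and $UR$ have identical counts of every letter, yet exchanging them inside a fixed code lowers the part recorded by that $U$ by one and so changes the sum. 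Hence ``same $U$-count and same net $R$-minus-$L$'' does not imply sum-preservation, and if (\ref{codereln2}) were justified only by those two counts the lemma would not follow; you genuinely need the positional computation $(n,m)\mapsto(m-1,n+1)$ that you (and the paper) also give. The reduction worry itself evaporates for a cleaner reason: deleting an adjacent $LR$ or $RL$ leaves the traced path, and therefore the composition, literally unchanged, so nothing about $l$ or the sum can be affected by it.
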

\begin{proof}
If $\alpha$ and $\beta$ differ by relation (\ref{codereln1}), notice the two sides of (\ref{codereln1}) are two different descriptions of the same path, $L^{k-1}UR^{k-1}$, so we actually have that $\mu$ = $\nu$.

If $\alpha$ and $\beta$ differ by relation (\ref{codereln1}), notice that both sides of (\ref{codereln2}) have a net shift of one unit leftward and two units upward.  This implies that $\mu$ and $\nu$ are the same composition except for the two components determined by the two upward steps in the changed subword.  In other words, both $\mu$ and $\nu$ have length $l$ and $\mu_i = \nu_i$ for $i = 1, 2, \ldots, j-1, j+2, \ldots, l$ for some $1\leq j \leq l-1$.

It remains only to show that the two components of $\mu$ that are changed to get $\nu$ have the same sum.  Notice that this case corresponds exactly to the picture above, so the corresponding components of $\mu$ and $\nu$ are $(n,m)$ and $(m-1,n+1)$ for some integer $n$ with $m=n+k$.
\end{proof}

\begin{theorem}\label{BetaK}
Let $\mu$ be any composition of $m$ with code $\alpha$.  Suppose that $\alpha$ can be written in the form
$$\alpha = \ldots \beta_3\beta_2\beta_1L^kU \gamma_1 \gamma_2 \gamma_3 \ldots$$
where $\beta = \ldots \beta_3\beta_2\beta_1$ consists only of $R$'s and $U$'s and $\beta_1=U$.

\begin{itemize}
\item If $\beta_k = U$, then $B_\mu = 0$.
\item If $\beta_k = R$, then $B_\mu = (-1)^j B_\nu$, where $j$ is the number of $U$'s in $\beta_{k-1}\ldots \beta_2\beta_1$ and $\nu$ is the composition of $m$ with code given by
$$\ldots \beta_{k+1}U\beta_{k-1}\ldots \beta_3\beta_2\beta_1L^{k-1} \gamma_1 \gamma_2 \gamma_3 \ldots$$
\end{itemize}
\end{theorem}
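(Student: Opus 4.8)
The plan is to reduce the statement to a repeated application of Proposition~\ref{codeprop}, pushing the block $L^kU$ leftward through the prefix $\beta=\ldots\beta_3\beta_2\beta_1$ one letter at a time. The key observation is that relations (\ref{codereln1}) and (\ref{codereln2}) are exactly the rules for moving a single letter ($R$ or $U$) from the left of $L^kU$ to its right, at the cost of shrinking $k$ by one and, in the $U$ case, picking up a sign. So I would set up an induction on $k$ (or equivalently on the number of letters of $\beta$ consumed), with the inductive claim that after $i$ steps the code has the form $\ldots\beta_{i+1}(\text{moved letters})L^{k-i}U(\text{moved letters})\gamma_1\gamma_2\ldots$, and the accumulated sign is $(-1)^{(\#U\text{'s among }\beta_i\ldots\beta_1)}$.

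First I would treat the base mechanics: given a code ending in $\ldots\beta_{i+1}\beta_iL^{k-i}U\ldots$ with $k-i\ge 1$, apply (\ref{codereln1}) if $\beta_i=R$ to get $\ldots\beta_{i+1}L^{k-i-1}UR\ldots$ (no sign), or apply (\ref{codereln2}) if $\beta_i=U$ to get $\ldots\beta_{i+1}L^{k-i-1}U\,U\ldots$ with a factor of $-1$; here I should note that after the move the newly freed $R$ (resp.\ $U$) slides to the right past whatever letters were already moved, and that the $R$'s produced cancel against nothing problematic since everything is kept reduced via $LR\sim RL\sim\phi$. Iterating this $k-1$ times consumes $\beta_1,\ldots,\beta_{k-1}$ and leaves the subword $L^1U$ sitting just to the right of $\beta_k$, i.e.\ the code now reads $\ldots\beta_{k+1}\beta_k(LU)\,(\text{stuff})\gamma_1\gamma_2\ldots$ with total sign $(-1)^j$ where $j=\#U\text{'s in }\beta_{k-1}\ldots\beta_1$, matching the exponent in the theorem.

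Now the dichotomy on $\beta_k$ falls out. If $\beta_k=U$, the local picture is $\ldots U L U\ldots$, and by the remark following Proposition~\ref{codeprop} (the $k=1$ case of (\ref{codereln2})) we have $ULU\sim 0$, hence $B_\mu=0$. If instead $\beta_k=R$, one more application of (\ref{codereln1}) with $k=1$ turns $R(LU)$ into $UR$ — that is, the $L$ disappears entirely, the $U$ that was the old $L^kU$'s second letter lands immediately after $\beta_{k+1}$ (this is the ``$U$'' written between $\beta_{k+1}$ and $\beta_{k-1}$ in the statement), and the remaining letters reassemble as $\beta_{k-1}\ldots\beta_1 L^{k-1}\gamma_1\gamma_2\ldots$. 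Carefully bookkeeping which moved $R$'s and $U$'s end up where — and checking this against the claimed target code $\ldots\beta_{k+1}U\beta_{k-1}\ldots\beta_2\beta_1L^{k-1}\gamma_1\gamma_2\ldots$ — is the one place the argument could go wrong, so I would verify the positions explicitly, perhaps by tracking the path geometrically (net displacement one unit left, two up, as in Lemma~\ref{SameSum}) rather than purely symbolically. Finally, Lemma~\ref{SameSum} guarantees that $\nu$ is still a composition of $m$ with the same number of parts, so the statement ``$\nu$ is the composition of $m$ with code $\ldots$'' is justified, completing the proof.

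I expect the main obstacle to be the bookkeeping in the $\beta_k=R$ case: making sure the single surviving $U$ is inserted in exactly the right slot and that the trailing block is precisely $\beta_{k-1}\ldots\beta_1L^{k-1}$ after all the intermediate $LR$-cancellations, rather than something off by a letter. Everything else is a clean induction driven entirely by Proposition~\ref{codeprop}.
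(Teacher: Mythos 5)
Your proposal matches the paper's proof essentially step for step: the paper likewise rewrites $L^kU$ as $\left(L^kUR^{k-1}\right)L^{k-1}$, commutes the parenthesized block left past $\beta_{k-1},\ldots,\beta_2,\beta_1$ via Proposition \ref{codeprop} to reach $\ldots\beta_{k+1}(\beta_kLU)\beta_{k-1}\ldots\beta_1L^{k-1}\gamma_1\gamma_2\ldots$ with accumulated sign $(-1)^j$, and then cases on $\beta_k$ using $ULU\sim 0$ and $RL\sim\phi$, with Lemma \ref{SameSum} supplying that $\nu$ is again a composition of $m$. One small correction to your final step: $R(LU)\sim U$, not $UR$ --- the $R$ and $L$ simply cancel --- which is precisely why the target code has a bare $U$ after $\beta_{k+1}$ and no stray $R$, consistent with the final code you in fact wrote down.
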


\begin{proof}
First, rewrite $\alpha$ in the form:
\begin{eqnarray*}
\alpha &=& \ldots\beta_3\beta_2\beta_1L^kU\gamma_1 \gamma_2 \gamma_3\ldots \\
&\sim& \ldots\beta_3\beta_2\beta_1\left(L^kUR^{k-1}\right)L^{k-1}\gamma_1 \gamma_2 \gamma_3\ldots
\end{eqnarray*}
By Proposition \ref{codeprop}, every time we permute $\left(L^kUR^{k-1}\right)$ left past a letter of $\beta$, $k$ decreases by one, and the sign changes if that letter was a $U$.  Thus if we permute $\left(L^kUR^{k-1}\right)$ past $k-1$ letters, we get that
\begin{eqnarray*}
\alpha &\sim& (-1)^j \ldots\beta_{k+1}\beta_k\left(L^1UR^0\right)\beta_{k-1}\ldots \beta_2\beta_1L^{k-1}\gamma_1 \gamma_2 \gamma_3\ldots \\
&=& (-1)^j \ldots\beta_{k+1}(\beta_kLU)\beta_{k-1}\ldots \beta_2\beta_1L^{k-1}\gamma_1 \gamma_2 \gamma_3\ldots
\end{eqnarray*}
where $j$ is the number of $U$'s in $\beta_{k-1}\ldots \beta_2\beta_1$.  If $\beta_k=U$, then $\alpha$ is related to a code with the subword $\beta_kLU=ULU \sim 0$, thus $B_\mu = 0$.
If $\beta_k =R$, then $\beta_kLU=RLU\sim U$, so
\begin{equation}\label{AlphSim}
\alpha \sim (-1)^j \ldots\beta_{k+1}U\beta_{k-1}\ldots \beta_2\beta_1L^{k-1}\gamma_1 \gamma_2 \gamma_3\ldots
\end{equation}
Thus $B_\mu = (-1)^j B_\nu$, where the code of $\nu$ is given by the right hand side of equation (\ref{AlphSim}).  Since in each step we used only relation (\ref{codereln2}), by Lemma \ref{SameSum} $\nu$ is also a composition of $m$.
\end{proof}

Another way to understand this theorem is to notice that the code of $\nu$ is obtained from the code $\alpha$ of $\mu$ by replacing the letter $\beta_k=R$ which is $k$ positions left of the leftmost $L$ in $\alpha$ with $U$ and by replacing $L^kU$ with $L^{k-1}$, letting $j$ be the number of $U$'s between these two positions.

\begin{corollary}\label{CGLem}
Let $\lambda = (\lambda_1, \lambda_2, \ldots, \lambda_l)$ be a partition and $n$ be any integer with $n < \lambda_1$.  Let $k = \lambda_1 - n$, and let $\zeta$ be the letter $k-1$ positions left of the rightmost $U$ in the code of $\lambda$.
\begin{itemize}
\item If $\zeta=U$, then $B_nB_\lambda = 0$.
\item If $\zeta = R$, then $B_nB_\lambda = (-1)^{j+1} B_\nu$, where $j$ is the number of $U$'s between the rightmost $U$ and $\zeta$, and where $\nu$ is the partition whose code is given by replacing $\zeta$ by $U$.
\end{itemize}
\end{corollary}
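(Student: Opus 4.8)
The plan is to derive this corollary as a direct specialization of Theorem~\ref{BetaK} to the case where $\mu$ is the composition obtained by prepending a single part $n$ to a partition $\lambda$. First I would write out the code of $\mu = (n, \lambda_1, \lambda_2, \ldots, \lambda_l)$ in terms of the code of $\lambda$. Since $n < \lambda_1$, prepending the part $n$ creates an exceedance $\mu_1 < \mu_2$, so by the remark following the definition of the composition code, an $L$-block appears. Concretely, if the code of $\lambda$ is $\ldots \beta_3 \beta_2 \beta_1 U \gamma_1 \gamma_2 \ldots$ where $\beta_1 = U$ corresponds to the top row of $\lambda$ and the $U$ immediately after $\beta$ is the step up the left side of the first row (the rightmost $U$), then tracing the first row of $\mu$ at height $n$ instead of $\lambda_1$ inserts $L^k U$ with $k = \lambda_1 - n$ right before that $U$; I would check the bookkeeping carefully so that the code of $\mu$ is exactly $\ldots \beta_3 \beta_2 \beta_1 L^k U \gamma_1 \gamma_2 \ldots$ in the notation of Theorem~\ref{BetaK}, with the same $\beta$.

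Next I would match up the indexing. The letter $\beta_k$ in Theorem~\ref{BetaK} is the letter $k$ positions left of the leftmost $L$ in the code of $\mu$; equivalently, since the $L$-block sits immediately to the left of the rightmost $U$ of $\lambda$'s code, $\beta_k$ is the letter $k - 1$ positions left of that rightmost $U$ — which is precisely the letter $\zeta$ in the corollary's statement. Then the two bullets of Theorem~\ref{BetaK} translate directly: if $\zeta = \beta_k = U$ we get $B_\mu = 0$, i.e. $B_n B_\lambda = 0$; if $\zeta = \beta_k = R$ we get $B_\mu = (-1)^j B_\nu$ where $j$ counts the $U$'s in $\beta_{k-1} \cdots \beta_2 \beta_1$. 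I need to reconcile this exponent with the $(-1)^{j+1}$ in the corollary: the $U$'s strictly between $\zeta$ and the rightmost $U$ of $\lambda$ are exactly $\beta_{k-1}, \ldots, \beta_2$, while $\beta_1 = U$ is itself a $U$ counted in Theorem~\ref{BetaK}'s $j$ but is the rightmost $U$, not one \emph{between}. So the corollary's $j$ is Theorem~\ref{BetaK}'s $j$ minus one, giving the sign $(-1)^{(j_{\text{cor}}+1)}$ — this off-by-one in the counting convention is the one genuinely fiddly point, and I would state the dictionary explicitly.

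Finally I would verify that $\nu$ is a partition, not merely a composition. Theorem~\ref{BetaK} produces $\nu$ with code $\ldots \beta_{k+1} U \beta_{k-1} \cdots \beta_2 \beta_1 L^{k-1} \gamma_1 \gamma_2 \ldots$; one then cancels the remaining $L^{k-1}$ against $R$'s on its left using $LR \sim RL \sim \phi$. Since the original $L^k$ came from lowering the first row of $\lambda$ by $k$, after replacing $\zeta = \beta_k$ by $U$ and cancelling, the net effect is that the first row of $\mu$ has been \emph{raised} past the position of $\zeta$; because $\zeta$ was the first $R$ encountered going left from the rightmost $U$ of $\lambda$, the resulting shape has a weakly decreasing first part and all other parts unchanged from $\lambda$, hence is a partition. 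This matches the corollary's description "$\nu$ is the partition whose code is given by replacing $\zeta$ by $U$" (and deleting the step up from $\lambda$'s first row, which is absorbed in the cancellation). I expect the main obstacle to be purely notational: keeping the two code conventions — the $\beta_k$-from-the-$L$ indexing of Theorem~\ref{BetaK} versus the $\zeta$-from-the-rightmost-$U$ indexing here — aligned, and tracking the single $U$ that accounts for the sign discrepancy; the underlying mathematics is immediate from the theorem.
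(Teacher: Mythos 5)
Your proposal follows the paper's proof exactly: specialize Theorem~\ref{BetaK} to $\mu = (n,\lambda_1,\ldots,\lambda_l)$, so that $\beta\gamma$ with $\gamma = RRR\ldots$ is precisely the code of $\lambda$ and $\beta_1$ its rightmost $U$, identify $\beta_k$ with $\zeta$, account for the off-by-one between the theorem's $j$ (which counts $\beta_1 = U$) and the corollary's $j$ (which counts only the $U$'s strictly between), and cancel the leftover $L^{k-1}$. Two bookkeeping slips: the code of $\mu$ is obtained by inserting $L^kU$ immediately to the \emph{right} of the rightmost $U$ of $\lambda$'s code (there is no extra $U$ between $\beta_1$ and $\gamma$ as written in your decomposition), and the $L^{k-1}$ cancels against the first $k-1$ letters of $\gamma$ to its \emph{right}, not against $R$'s on its left. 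More substantively, your closing justification that $\nu$ is a partition rests on false premises --- $\zeta$ need not be the first $R$ encountered going left from the rightmost $U$, and $\nu$ generally differs from $\lambda$ in several parts (e.g.\ $\lambda=(3,1)$, $n=0$ gives $\nu=(2,1,1)$) --- whereas the correct and simpler reason, which is what the paper uses, is that after cancellation the code of $\nu$ contains no $L$'s at all, and any such reduced code in $R$ and $U$ is the code of a partition, namely the code of $\lambda$ with $\zeta$ replaced by $U$.
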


\begin{proof}
The corollary follows immediately from Theorem \ref{BetaK}, for the special case where $\mu = (n, \lambda_1, \lambda_2, \ldots, \lambda_l)$.  In this case, $\beta_1$ is the rightmost $U$ in the code of $\lambda$, $\beta_k = \zeta$, $\gamma = \gamma_1 \gamma_2 \gamma_3 \ldots = RRR\ldots$ and $\beta\gamma = \ldots \beta_3\beta_2\beta_1 \gamma_1 \gamma_2 \gamma_3 \ldots$ is exactly the code of $\lambda$.  In this case, when we replace $L^kU$ by $L^{k-1}$, it cancels with the first $k-1$ $R$'s in $\gamma$, so our result will actually be a partition.
\end{proof}

\begin{corollary}\label{B_comp}
Let $\mu$ be any composition of $m$ with code $\alpha$.  Then either $B_\mu = 0$ or $B_\mu = \pm B_\lambda$ for some partition $\lambda$ of $m$ with the same length as $\mu$.
\end{corollary}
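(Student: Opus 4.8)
The plan is to induct on the number of $L$'s in the reduced code $\alpha$ of $\mu$, which by the remark following the definition of the code of a composition is the number of exceedances of $\mu$. For the base case, if $\alpha$ has no $L$'s then it is a doubly infinite word in the letters $R$ and $U$, hence it is the code of a partition; since a composition is determined by its code, $\mu$ itself is that partition, and it is a partition of $m$ of length $l$, so there is nothing to prove.

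For the inductive step, assume $\alpha$ contains at least one $L$. First I would locate the leftmost maximal block $L^k$ of consecutive $L$'s in $\alpha$. Because $\alpha$ is reduced it contains no factor $RL$ or $LR$; together with the facts that $L^k$ is the \emph{leftmost} $L$-block and that $\alpha$ is eventually constant equal to $U$ at its left end and to $R$ at its right end, this forces the letter immediately left of $L^k$ to be $U$, the letter immediately right of $L^k$ to be $U$, and every letter left of $L^k$ to lie in $\{R,U\}$. Thus $\alpha$ has exactly the form required by Theorem \ref{BetaK}, with $\beta_1=U$ and with $\gamma=\gamma_1\gamma_2\cdots$ the tail of $\alpha$ following the distinguished $U$.

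Now I would apply Theorem \ref{BetaK}. If $\beta_k=U$ — which in particular occurs whenever $k=1$, since then $\beta_k=\beta_1=U$ — the theorem gives $B_\mu=0$ and we are done. Otherwise $\beta_k=R$, so $k\geq 2$, and the theorem gives $B_\mu=(-1)^j B_\nu$ where $\nu$ is a composition of $m$ whose code is obtained from $\alpha$ by changing one $R$ to $U$ and replacing $L^kU$ by $L^{k-1}$ (then reducing). By Lemma \ref{SameSum} the composition $\nu$ has the same length $l$ as $\mu$. The key observation is that this operation strictly decreases the $L$-count: it destroys one $L$ outright (replacing $L^k$ by the nonempty $L^{k-1}$), it creates no new $L$, and any $LR$/$RL$ cancellations made while reducing can only remove further $L$'s. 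Hence the induction hypothesis applies to $\nu$: either $B_\nu=0$, giving $B_\mu=0$, or $B_\nu=\pm B_\lambda$ for a partition $\lambda$ of $m$ of length $l$, giving $B_\mu=(-1)^j(\pm B_\lambda)=\pm B_\lambda$.

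The step I would be most careful about is the structural claim that the leftmost $L$-block always presents $\alpha$ in precisely the shape demanded by Theorem \ref{BetaK} — namely the condition $\beta_1=U$ and the absence of $L$'s throughout $\beta$ — and the accompanying bookkeeping that the $L$-count truly drops even after post-reduction. Both hinge only on the standing "reduced code" convention and on $\alpha$ being eventually constant at its two ends; everything else is a direct appeal to Theorem \ref{BetaK} and Lemma \ref{SameSum}.
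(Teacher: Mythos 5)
Your proof is correct, and every step checks out: the reducedness and leftmost-block arguments do force $\alpha$ into the exact shape required by Theorem \ref{BetaK}, and the $L$-count does strictly drop (from $k$ to $k-1$ in the leftmost block, with reduction only helping), so the induction closes. However, you have chosen a different induction measure from the one the paper uses. The paper explicitly acknowledges that induction on the number of $L$'s "follows directly and immediately," but deliberately inducts instead on the number of $U$'s to the \emph{right} of the leftmost $L$: its base case is that with no such $U$'s all the $L$'s cancel against trailing $R$'s, and its inductive step observes that applying Theorem \ref{BetaK} reduces that count by at least one. The reason for the paper's choice is not elegance but portability: in the Schur $Q$-function analogue (Corollary \ref{Y_comp}), the relevant reduction step (Theorem \ref{BetaK+J}) replaces $L^kU$ by $L^k$ rather than $L^{k-1}$, so the $L$-count does \emph{not} decrease and your measure fails there, whereas the count of $U$'s right of the leftmost $L$ still drops. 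So your argument buys a slightly cleaner base case and bookkeeping for this corollary, at the cost of not transferring to the later result; if you intend to reuse the argument in Section \ref{sec:schur}, you would need to switch to the paper's measure.
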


The proof of this corollary follows directly and immediately from Theorem \ref{BetaK} by using induction on the number of $L$'s in $\alpha$.  However, we will present the proof using induction on the number of $U$'s to the right of the leftmost $L$ in $\alpha$ to better generalize to our later results.

\begin{proof}
Consider the leftmost sequence of consecutive $L$'s in $\alpha$.  If there are no $U$'s right of this sequence, then all the $L$'s cancel, and $\mu$ is a partition.  So assume there is at least one $U$ after the leftmost $L$ and assume $\alpha$ is written in reduced form.  Then in the notation of the theorem, we have
$$\alpha = \ldots \beta_3\beta_2\beta_1L^kU\gamma_1\gamma_2\gamma_3\ldots$$
where $\beta = \ldots \beta_3\beta_2\beta_1$ consists only of $R$'s and $U$'s.  By Theorem \ref{BetaK}, either $\alpha \sim 0$ or
$$\alpha = \ldots \beta_{k+1}U\beta_{k-1}\ldots \beta_3\beta_2\beta_1L^{k-1}\gamma_1\gamma_2\gamma_3\ldots$$
In the later case the number of $U$'s to the right of the leftmost $L$ has decreased by one (or more if $L^{k-1}$ cancels completely with $R$'s in $\gamma$) so the result holds by induction.\end{proof}

In particular, this corollary provides a simple way to compute $B_\mu \cdot 1$ in terms of Schur functions for any composition $\mu$, since $B_\lambda\cdot 1 = s_\lambda$ by equation (\ref{bernschur}) for any partition $\lambda$.

In fact, given any composition $\mu$, the number of times we have to apply Theorem \ref{BetaK} to get  $B_\mu = \pm B_\lambda$ for some partition $\lambda$ is less than or equal to the number of $U$'s right of the leftmost $L$ in the code of $\mu$.  Equivalently, the maximum number of steps is the largest $i$ such that $\mu_i<\mu_{i+1}$, i.e. the position of the last exceedance in $\mu$.  From the remark before the proof, we can also say that the number of steps is less than or equal to the total number of $L$'s in the code of $\mu$.

We now present a purely combinatorial approach to computing the partition $\lambda$ such that $B_\mu = \pm B_\lambda$ for a given composition $\mu$, as well as the sign itself.

\begin{proposition}
Given a composition $\mu$ with code $\alpha$, $B_\mu = (-1)^jB_\lambda$, where $j$ and the code of $\lambda$ are obtained by reading then deleting letters left to right starting with the leftmost $L$ in $\alpha$ and keeping track of a position in the code, starting with the same $L$.
\begin{itemize}
\item Every time an $L$ or $R$ is read, move one position in that direction.
\item If a $U$ is read and the letter in the current position is a $U$, $B_\mu = 0$.
\item If a $U$ is read and the letter in the current position is an $R$, increase $j$ by the number of $U$'s between these two positions and replace the $R$ with a $U$.  Move right one position.
\end{itemize}
Stop when the current position indicates the next letter to be read.  If there are still any $L$'s in the code, repeat this process.
\end{proposition}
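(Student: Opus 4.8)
\emph{The plan is to show that the stated algorithm is nothing more than a streamlined bookkeeping of the repeated application of Theorem \ref{BetaK}.} By Corollary \ref{B_comp} we already know that iterating Theorem \ref{BetaK} reduces $\alpha$ either to $0$ or to $(-1)^{j}$ times the code of a partition $\lambda$ of $m$, so it suffices to prove that the algorithm produces the same partition and the same sign. I would set up the correspondence so that each time the reading scan passes over a $U$ while the current position sits on an $R$, the conversion $R\mapsto U$ together with the increment of $j$ is exactly one application of the second branch of Theorem \ref{BetaK}, while each time the scan passes over a $U$ with the current position on a $U$, the vanishing $B_\mu=0$ is exactly the first branch (equivalently the relation $ULU\sim 0$ recorded just after Proposition \ref{codeprop}).

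First I would establish the base case by hand. Starting with both pointers on the leftmost $L$, reading the initial run $L^{k}$ deletes those $k$ letters and walks the current position $k$ steps to the left, so that when the following $U$ is read the current position rests on the letter $\beta_k$ lying $k$ places to the left of the $L$-run, and the number of $U$'s strictly between the two pointers is the number of $U$'s in $\beta_{k-1}\cdots\beta_1$. This is precisely the data $(k,\beta_k,j)$ of the first step of Theorem \ref{BetaK}, so the first conversion matches. For the inductive step I would track how the configuration evolves after a conversion: the deletions remove the consumed $L$'s and $U$, the single rightward move of the current position records that the residual run has shrunk from $L^{k}$ to $L^{k-1}$, and the subsequent $L$- and $R$-reads (each an instance of the cancellation $LR\sim RL\sim\phi$ underlying relation (\ref{codereln1})) carry the current position to the place demanded by the next application of Theorem \ref{BetaK}. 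The scan terminates when the current position catches the reading pointer, which happens once the $L$'s encountered in this scan have all been cancelled; the outer ``repeat'' clause then restarts the scan at the next remaining leftmost $L$, if any, and the induction closes using Lemma \ref{SameSum} to guarantee that every intermediate word is still the code of a composition of $m$.

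\emph{The hard part will be the pointer invariant}, i.e.\ proving that the current position is always where Theorem \ref{BetaK} needs it to be. The delicate point is that when Theorem \ref{BetaK} turns $L^{k}U$ into $L^{k-1}$ it leaves a residual run $L^{k-1}$, whereas the algorithm physically deletes all $k$ of the $L$'s together with the $U$; the missing $L^{k-1}$ is encoded only implicitly, as an offset of the current position relative to the boundary of the deleted block. I would therefore formulate and prove by induction the invariant that, immediately before each $U$ is read, the number of undeleted letters strictly between the current position and that $U$ equals $k-1$, where $L^{k}$ is the effective leftmost $L$-run terminated by this $U$ in the current reduced code, and the letter at the current position is the corresponding $\beta_k$. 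Verifying this is subtle precisely because a single scan can process several consecutive $L$-runs separated by $U$'s, in which case the residual $L$'s from one run silently merge with the next run (for instance a block $L^{2}UL^{2}U$ is handled in one pass as though it were a single reduction of size three), and one must check that the leftward steps contributed by the later $L$-reads combine correctly with the rightward offset left by the earlier conversion. Once this invariant is in place, matching the final deleted-block ``jump'' that equalizes the two pointers to the complete cancellation of the residual $L$'s finishes the argument, and the accumulated $j$ is by construction the sum of the per-step increments, hence the correct global sign.
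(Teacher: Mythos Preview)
Your proposal is correct and follows essentially the same approach as the paper: both show that the algorithm is step-by-step bookkeeping for iterated applications of Theorem \ref{BetaK}, matching each read $U$ to one invocation of the theorem and each read $L$ or $R$ to the trivial cancellation $LR\sim RL\sim\phi$. The paper's argument is considerably terser than yours---it simply observes that reading $L^k$ moves the current position $k$ left so the subsequent $U$ triggers the same change as Theorem \ref{BetaK}, and then handles the follow-up $R$ or $L$ cases in one line each---but the logic is identical, and your more explicit pointer invariant is a reasonable way to make precise what the paper leaves implicit.
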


Note that in this method you must keep track of two positions, the position which is being read and the ``current position" which indicates which letter will be changed by any $U$'s which are read.  Note also that this proposition holds even if $\alpha$ is not in reduced form.

\begin{proof}
Suppose that the leftmost sequence of consecutive $L$'s in $\alpha$ is $L^k$ and that this is followed by a $U$ (this will always happen if $\alpha$ is in reduced form).  Then by Theorem \ref{BetaK}, $\alpha \sim 0$ if the letter $k$ positions left of $L^k$ is a $U$ and if this letter is an $R$, replace it by $U$, replace $L^kU$ by $L^{k-1}$, and change the sign by the number of $U$'s between these positions.  Following the algorithm in the proposition, we read $k$ $L$'s so we move left $k$ positions.  Since the next letter is a $U$, we perform the same change to the sign and the letter in the current position and will next consider the letter one position to the right of the changed position, which corresponds to $L^kU$ being replaced by $L^{k-1}$.

If the letter after $L^k$ is $R$, then $L^kR \sim L^{k-1}$ so we will next consider the letter one position further to the right.  If the letter after $L^k$ is $L$, then $L^kL \sim L^{k+1}$ so we will next consider the letter one position further to the left.  We stop this process when all the $L$'s in $L^k$ have been cancelled.
\end{proof}

\begin{example}
Consider the composition $\mu = (1,3,1,6,2)$.  Then the proposition gives the following.  In each step below we read off letters until a $U$ is reached and underline these letters, while the arrows indicate how the current position changes.  Note that we delete the underlined letters after each step.

\begin{center}
$
\begin{array}{cccl}
\alpha & = &  & \ldots UUURRURRRRU\underline{LLLLLU}RRULLURRRR\ldots \\
&&& \phantom{\ldots UUURRU}
\raisebox{1pt}{$\,\downarrow$}
\raisebox{-3pt}{$\! \curvearrowright$}
\raisebox{5pt}{$\!\!\!\!\!\! \lcurvearrowleft\hspace{-1pt}\lcurvearrowleft\hspace{-1pt}\lcurvearrowleft\hspace{-1pt}\lcurvearrowleft\hspace{-1pt}\lcurvearrowleft$}\\
& \rightarrow & (-1)^{1\phantom{+1+2}} & \ldots UUURRUURRRU\underline{RRU}LLURRRR\ldots \\
&&& \phantom{\ldots UUURRUU}
\raisebox{5pt}{$\,\,\rcurvearrowright\hspace{-1pt}\rcurvearrowright$}
\raisebox{1pt}{$\!\downarrow$}
\raisebox{-3pt}{$\!\curvearrowright$} \\
& \rightarrow & (-1)^{1+1\phantom{+2}} & \ldots UUURRUURRUU\underline{LLU}RRRR\ldots \\
&&& \phantom{\ldots UUURRUUR}
\raisebox{1pt}{$\,\downarrow$}
\raisebox{-3pt}{$\! \curvearrowright$}
\raisebox{5pt}{$\!\!\!\!\!\! \lcurvearrowleft\hspace{-1pt}\lcurvearrowleft$}\\
& \rightarrow & (-1)^{1+1+2} & \ldots UUURRUURUUU\underline{RR}RR\ldots \\
&&& \phantom{\ldots UUURRUURU}
\raisebox{5pt}{$\,\,\rcurvearrowright\hspace{-1pt}\rcurvearrowright$}\\
\alpha & \sim & (-1)^{1+1+2} & \ldots UUURRUURUUURR\ldots \\
\end{array}
$
\end{center}
We stop since the current position is the same as the next letter to be read.  This final code has no remaining $L$'s so we are done.  This code is the code of the partition $\lambda = (3,3,3,2,2)$, so $B_{(1,3,1,6,2)} = (-1)^4 B_{(3,3,3,2,2)} = B_{(3,3,3,2,2)}$.  In particular, this means that $B_1B_3B_2B_6B_3\cdot 1 = B_\mu \cdot 1 = B_\lambda \cdot 1 = s_\lambda = s_{(3,3,3,3,3)}$.  
\end{example}

\begin{definition}
Define $r_i(\lambda)$ to be the number of $R$'s in the code of the partition $\lambda$ left of the $i^{\text{th}}$ $U$ from the right in the code of $\lambda$.
\end{definition}

Note that $r_i(\lambda) = \lambda_i$, the $i^{\text{th}}$ component of $\lambda$.

\begin{definition}\label{(i)}
For any partition $\lambda$, define $\lambda^{(i)}$ to be the partition obtained from the code of $\lambda$ by replacing the $i^{\text{th}}$ $R$ from the left in the code of $\lambda$ by $U$.
\end{definition}

In particular, this means that $\lambda^{(i)} = (\lambda_1-1, \lambda_2-1, \ldots, \lambda_j-1, i-1, \lambda_{j+1}, \ldots, \lambda_l)$, where $\lambda_j-1 \geq i-1 \geq \lambda_{j+1}$, so $\lambda_j \geq i > \lambda_{j+1}$.  By convention, we take $\lambda_0=\infty$ and $\lambda_{l+1} = 0$, so the formula for $\lambda^{(i)}$ holds for $0\leq j \leq l$.

\begin{theorem}\label{CGCodeThm}  For any partition $\lambda$,
\begin{equation}\label{cgintro}
B(t)s_\lambda=\sum_{i\geq1}(-1)^{|\lambda|-|\lambda^{(i)}|+i-1}t^{|\lambda^{(i)}|-|\lambda|}
s_{\lambda^{(i)}}.
\end{equation}
\end{theorem}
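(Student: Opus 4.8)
The plan is to deduce the Carrell--Goulden formula from Corollary \ref{CGLem} by expanding $B(t) = \sum_{n} B_n t^n$ and applying the corollary term by term. Fix the partition $\lambda$, and recall $s_\lambda = B_\lambda \cdot 1$ by equation (\ref{bernschur}). Then $B(t) s_\lambda = \sum_{n \in \mathbb{Z}} t^n\, B_n B_\lambda \cdot 1$, so it suffices to understand $B_n B_\lambda$ for each integer $n$. First I would dispose of the range $n \geq \lambda_1$: here $(n, \lambda_1, \dots, \lambda_l)$ is already a partition (or, when $n = \lambda_1$, needs no reduction), and one checks directly that $B_n B_\lambda \cdot 1 = s_{(n,\lambda_1,\dots,\lambda_l)}$, which is exactly the $i$-term of the right-hand side corresponding to the $R$ immediately following the leftmost part of the code — i.e. $\lambda^{(i)}$ with $i$ large enough that the new $U$ sits to the left of all existing $U$'s. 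For $n < \lambda_1$, Corollary \ref{CGLem} applies with $k = \lambda_1 - n$: either $B_n B_\lambda = 0$, or $B_n B_\lambda = (-1)^{j+1} B_\nu$ where $\nu$ is obtained from the code of $\lambda$ by changing a single $R$ (the letter $\zeta$, which is $k-1$ positions left of the rightmost $U$) into a $U$, and $j$ counts the $U$'s strictly between them.

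The heart of the argument is the bookkeeping that identifies this $\nu$ with some $\lambda^{(i)}$ and matches the exponents. By Definition \ref{(i)}, $\lambda^{(i)}$ is obtained by turning the $i$-th $R$ \emph{from the left} into a $U$; I would translate "$k-1$ positions left of the rightmost $U$" into a statement about which $R$ from the left is being changed. Since $r_i(\lambda) = \lambda_i$ records the number of $R$'s left of the $i$-th $U$ from the right, and the rightmost $U$ sits just after the $\lambda_1$-th $R$ from the left, the letter $\zeta$ at position $k-1 = \lambda_1 - n - 1$ steps to its left is the $(\lambda_1 - (k-1))$-th $= (n+1)$-th $R$ from the left precisely when that position holds an $R$ — so $\nu = \lambda^{(n+1)}$ in the non-vanishing case, and $\zeta = U$ (the vanishing case) occurs exactly when position $n+1$ among the $R$-slots is "blocked" by a $U$, i.e. when there is no valid $\lambda^{(n+1)}$. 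Thus as $n$ ranges over $n < \lambda_1$ and $i = n+1$ ranges over $i \leq \lambda_1$, the nonzero terms $B_n B_\lambda$ are in bijection with the partitions $\lambda^{(i)}$ whose distinguished new $U$ lies among the old $U$'s, and combined with the $n \geq \lambda_1$ case we get exactly all $\lambda^{(i)}$, $i \geq 1$.

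It remains to check the three numerical matchups: the power of $t$, and the two contributions to the sign. The exponent: replacing one $R$ by a $U$ in the code changes $|\lambda|$ to $|\lambda^{(i)}| = |\lambda| + (\text{net effect})$, and chasing the definitions gives $|\lambda^{(i)}| - |\lambda| = n$, so $t^n = t^{|\lambda^{(i)}| - |\lambda|}$ as required. The sign: Corollary \ref{CGLem} yields $(-1)^{j+1}$ where $j$ is the number of $U$'s between the rightmost $U$ and $\zeta$; I must show $j + 1 \equiv |\lambda| - |\lambda^{(i)}| + i - 1 \pmod 2$. Writing everything in terms of how many $R$'s and $U$'s lie to the right of the changed position — the number of boxes "passed over" when the $R$ becomes a $U$ — should show that $|\lambda| - |\lambda^{(i)}|$ equals (number of $R$'s strictly right of $\zeta$) and that $i - 1 \equiv j + (\text{number of }R\text{'s right of }\zeta) + 1$, whence the parities agree; the $n \geq \lambda_1$ terms all carry sign $+1$ and exponent $n \geq 0$, consistent with $j = 0$, no $R$'s to the right, and $i - 1$ even only after the shift — this edge case I would verify separately. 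The main obstacle is precisely this last parity reconciliation: the index conventions in Corollary \ref{CGLem} (counting from the rightmost $U$) and in Definition \ref{(i)} (counting $R$'s from the left) measure different things, and keeping the off-by-one and mod-$2$ accounting honest — including the degenerate cases $j = l$ and the boundary $n = \lambda_1$ — is where the real care is needed; everything else is a direct translation through the definitions of $r_i$ and $\lambda^{(i)}$.
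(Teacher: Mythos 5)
Your overall strategy is the paper's: expand $B(t)$, handle $n\geq\lambda_1$ directly, apply Corollary \ref{CGLem} for $n<\lambda_1$, and then reconcile the corollary's indexing with Definition \ref{(i)}. But the central bookkeeping step is wrong as stated. You claim that $\zeta$, the letter $k-1$ positions left of the rightmost $U$, is the $(\lambda_1-(k-1))$-th $=(n+1)$-th $R$ from the left, so that $\nu=\lambda^{(n+1)}$. That computation implicitly assumes all $k-1$ intervening letters are $R$'s; in general $j$ of them are $U$'s (one for each part $\lambda_2,\lambda_3,\dots$ whose corner lies in that window), so $\zeta$ is actually the $(n+j+1)$-th $R$ from the left and the correct identification is $\nu=\lambda^{(n+j+1)}$. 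With your index $i=n+1$ the numerical matchups you defer to the end cannot be made to work: from Definition \ref{(i)} one has $|\lambda^{(i)}|-|\lambda|=i-1-j$, which equals $n$ only when $i=n+j+1$; taking $i=n+1$ gives exponent $n-j$, not $n$, and the sign $(-1)^{|\lambda|-|\lambda^{(i)}|+i-1}=(-1)^j$ likewise only matches the corollary's $(-1)^{j+1}$ (with its off-by-one convention for $j$) under the corrected index.

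A secondary consequence is that your surjectivity claim (``$i=n+1$ ranges over $i\leq\lambda_1$, so together with $n\geq\lambda_1$ we get all $i\geq1$'') also needs repair: with $i=n+j+1$ the map from $n$ to $i$ is not a simple shift, and one must check, as the paper does, that as $n$ runs over the interval $(\lambda_{j+1}-(j+1),\,\lambda_j-j)$ on which $j$ is constant, the index $i=n+j+1$ sweeps out exactly $(\lambda_{j+1},\lambda_j]$, and that the excluded endpoints $n=\lambda_j-j$ are precisely the values where $\zeta=U$ and the term vanishes. You should also dispose of $n<-l$ (where $B_n s_\lambda=0$ because $B_{-m}\cdot 1=0$) so that the sum over $j$ genuinely accounts for every integer $n$. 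None of this is fatal to the approach — it is the same route the paper takes — but the identification $\nu=\lambda^{(n+1)}$ is a genuine error, not merely a deferred verification.
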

\begin{proof}
From the definition of $B(t) = \sum_{n\in \mathbb{Z}} B_nt^n$, and equation (\ref{bernschur}), we have that $B(t)s_\lambda = \sum_{n\in\mathbb{Z}}B_nt^n(B_{\lambda_1}B_{\lambda_2}\cdots B_{\lambda_l}\cdot 1)$.  So it is sufficient to consider the coefficient of $t^n$, $B_nB_{\lambda_1}B_{\lambda_2}\cdots B_{\lambda_l}$.

If $n \geq \lambda_1$, then $B_nB_\lambda = B_{(n,\lambda)}$ is already in decreasing order.  In fact, since $(n, \lambda) = \lambda^{(n+1)}$, we can write $B_nB_\lambda \cdot 1= B_{(n,\lambda)} \cdot 1= B_{\lambda^{(n+1)}} \cdot 1 = s_{\lambda^{(n+1)}}$.

If $n < \lambda_1$, then we write the code of $\lambda$ in the form
$$\alpha = \ldots \beta_3 \beta_2 \beta_1 RRR\ldots$$
where $\beta_1 = U$.  Let $j$ be the number of $U$'s in $\beta_k \beta_{k-1} \ldots \beta_2 \beta_1$, where $k = \lambda_1 - n$.

From Corollary \ref{CGLem}, if $\beta_k=U$, then $\alpha \sim 0$, so $B_nB_\lambda = 0$.  Since $\beta_k$ is the $j^{\text{th}}$ $U$ from the right in $\beta_k \beta_{k-1} \ldots \beta_2 \beta_1$, $\beta_k$ is also the $j^{\text{th}}$ $U$ from the right in $\alpha$.  Since $r_i(\lambda) = \lambda_i$ is the number of $R$'s left of the $i^{\text{th}}$ $U$ from the right in $\alpha$, the number of $R$'s between the $(i+1)^{\text{th}}$ $U$ from the right and the $i^{\text{th}}$ $U$ from the right is $r_i(\lambda) - r_{i+1}(\lambda) = \lambda_i - \lambda_{i+1}$.  In this case we can write
$$\alpha = \ldots \beta_{k+1} U R^{\lambda_{j-1}-\lambda_j}U \ldots UR^{\lambda_2-\lambda_3} UR^{\lambda_1-\lambda_2} URRR\ldots$$
So the $k$ letters $\beta_k \beta_{k-1} \ldots \beta_2 \beta_1$ consist of $j$ $U$'s and $(\lambda_1-\lambda_2) + (\lambda_2-\lambda_3) + \cdots + (\lambda_{j-1}-\lambda_j)$ $R$'s.  So we have:
\begin{eqnarray*}
\lambda_1 - n \quad = \quad k &=& j + (\lambda_1-\lambda_2) + (\lambda_2-\lambda_3) + \cdots + (\lambda_{j-1}-\lambda_j)\\
n+j &=& \lambda_1 - (\lambda_1-\lambda_2) - (\lambda_2 - \lambda_3) - \cdots - (\lambda_{j-1} - \lambda_j)\\
n+j &=& \lambda_j\\
n &=& \lambda_j - j
\end{eqnarray*}
Thus $B_nB_\lambda = 0$ exactly when $n = \lambda_j - j$ for some $j = 1, 2, \ldots, l$.

If $n \neq \lambda_j - j$ for any $j$,  then we must have that $\beta_k = R$.  Hence by Corollary \ref{CGLem}, $B_{(n,\lambda)} = (-1)^j B_\nu$, where $j$ is the number of $U$'s in $\beta_k \beta_{k-1} \ldots \beta_2 \beta_1$ and $\nu$ is the partition whose code is the same as $\alpha$ except that $\beta_k = U$.  Then the number of $R$'s in $\beta_k \beta_{k-1} \ldots \beta_2 \beta_1$ is $k-j$, and the number of $R$'s in all of $\beta = \ldots \beta_{k+1} \beta_k \beta_{k-1} \ldots \beta_2 \beta_1$ is $\lambda_1 = r_1(\lambda)$, which is the number of $R$'s left of the rightmost $U$ in $\alpha$, $\beta_1$.  Thus there are $\lambda_1 - (k-j) = \lambda_1 - k + j = \lambda_1 - (\lambda_1 - n) + j = n+j$ $R$'s left of $\beta_k$ in $\alpha$.  So $\nu$ is the partition obtained by replacing the $(n+j+1)^{\text{th}}$ $R$ from the left in the code $\alpha$ of $\lambda$ with a $U$.  Thus $\nu = \lambda^{(n+j+1)}$ and $B_nB_\lambda \cdot 1 = (-1)^j B_\nu \cdot 1= (-1)^j s_{\lambda^{(n+j+1)}}$.

Using the convention that $\lambda_0=\infty$ and $\lambda_{l+1} = 0$, we obtain a cover of the integers greater than or equal to $-l$: $[-l, \infty)=\displaystyle\bigcup_{j=0}^l (\lambda_{j+1}-(j+1), \lambda_j-j]$.  We know that if $n < -l$, then $B_ns_\lambda = (-1)^l B_{\lambda_1-1}B_{\lambda_2-1}\cdots B_{\lambda_l-1}B_{n+l} \cdot 1 = 0$ since $B_{-m}\cdot 1 = B_{-m}B_0\cdot 1 =B_{-1}B_{-1}\cdots B_{-1}B_0\cdot 1= 0$.  Also note that the right limits of this cover are of the form $n = \lambda_j - j$, so we know that $B_nB_\lambda = 0$.  Thus:
$$B(t)s_\lambda
= \sum_{n\in \mathbb{Z}} t^nB_ns_\lambda
= \sum_{j=0}^l \sum_{n=\lambda_{j+1}-j}^{\lambda_j-j} t^nB_ns_\lambda
= \sum_{j=0}^l \sum_{n=\lambda_{j+1}-j}^{\lambda_j-j-1} (-1)^j t^n s_{\lambda^{(n+j+1)}}.
$$
But looking at the summation on the right hand side, $\lambda_{j+1}-j \leq n \leq \lambda_j-j-1$, so $\lambda_{j+1}+1\leq n+j+1 \leq \lambda_j$, so $\lambda_{j+1} < n+j+1 \leq \lambda_j$ so the indices of $\lambda^{(n+j+1)}$ in the summation cover all positive integers.  Hence
$$B(t)s_\lambda
= \sum_{j=0}^l \sum_{n=\lambda_{j+1}-j}^{\lambda_j-j-1} (-1)^j t^n s_{\lambda^{(n+j+1)}}
=\sum_{i\geq1}(-1)^{|\lambda|-|\lambda^{(i)}|+i-1}t^{|\lambda^{(i)}|-|\lambda|}
s_{\lambda^{(i)}},$$
since $|\lambda|-|\lambda^{(i)}|+i-1 = j$ and $|\lambda^{(i)}|-|\lambda| = n$ from the definition of $\lambda^{(i)}$.
\end{proof}

\section{Schur Functions indexed by compositions}\label{sec:genschur}

In this section we will use the functions, {\it Schur functions indexed by compositions}, to show the usefulness of the results obtained in the previous section.  In this section we will consider only compositions consisting of all nonnegative components.

Let $\delta = (l-1, l-2, \ldots, 2, 1, 0)$ and define $a_\mu = \det ({x_i}^{\mu_j})_{1\leq i, j \leq l}$ for any composition $\mu$ of $n$ of length $l$.  Then one classical definition of the {\it Schur polynomials} is given by $s_\lambda (x_1, x_2, \ldots, x_l) = \dfrac{a_{\lambda+\delta}}{a_\delta}$ for any partition $\lambda$ of $n$ of length $l$.  This definition can also be generalized to compositions as follows.

\begin{definition}
Let $\mu$ be a composition of $n$ of length $l$.  Define the {\it Schur polynomial indexed by the composition $\mu$} to be $s_\mu(x_1, x_2, \ldots, x_l) = \dfrac{a_{\mu+\delta}}{a_\delta}$.
\end{definition}

\begin{lemma}\label{SchurLem}
Let $\mu$ and $\nu$ be any two compositions of length $l$ with codes $\alpha$ and $\beta$.  Then $s_\mu(x_1, x_2, \ldots, x_l) = \pm s_\nu(x_1, x_2, \ldots, x_l)$ if and only if $\alpha \sim \pm \beta$ as described in Sections \ref{sec:codereln} and \ref{sec:compos}.
\end{lemma}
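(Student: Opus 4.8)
The plan is to reduce both sides of the equivalence to statements about partitions. Recall the two elementary facts about bialternants: writing $a_{\mu+\delta} = \det(x_i^{(\mu+\delta)_j})_{1\le i,j\le l}$, permuting the entries of $\mu+\delta$ by a permutation $\sigma$ permutes the columns of this determinant and so multiplies it by $\sgn(\sigma)$; in particular $a_{\mu+\delta} = 0$, hence $s_\mu = 0$, exactly when two entries of $\mu+\delta$ coincide, and otherwise $s_\mu = \sgn(\sigma)\,s_\lambda$ where $\sigma$ sorts $\mu+\delta$ into the strictly decreasing sequence $\lambda+\delta$ of a unique partition $\lambda$ with at most $l$ parts. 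We will also use that the Schur polynomials $s_\lambda(x_1,\dots,x_l)$, over partitions $\lambda$ with at most $l$ parts, are linearly independent, so no two of them agree up to sign and none is zero, together with Corollary \ref{B_comp}, which reduces $B_\mu$ either to $0$ or to $\pm B_\lambda$ for a partition $\lambda$ of the same length $l$.

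The key step is to observe that each elementary move generating $\sim$ changes $s_\mu$ by exactly the sign carried by that move. A cancellation $LR\sim RL\sim\phi$ and the move (\ref{codereln1}) leave the underlying composition unchanged (Lemma \ref{SameSum}), so they fix $s_\mu$. For the move (\ref{codereln2}), Lemma \ref{SameSum} says that $\nu$ is obtained from $\mu$ by replacing two adjacent components $(\mu_j,\mu_{j+1}) = (n,m)$, with $m = n+k > n$, by $(\nu_j,\nu_{j+1}) = (m-1,n+1)$, leaving the other components fixed. Then $(\nu+\delta)_j = (m-1)+(l-j) = m+(l-j-1) = (\mu+\delta)_{j+1}$ and $(\nu+\delta)_{j+1} = (n+1)+(l-j-1) = n+(l-j) = (\mu+\delta)_j$, so $\nu+\delta$ is obtained from $\mu+\delta$ by an adjacent transposition; hence $a_{\nu+\delta} = -a_{\mu+\delta}$ and $s_\nu = -s_\mu$, matching the sign in (\ref{codereln2}). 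Finally, the remaining possibility in Theorem \ref{BetaK} is that the reduction meets a code containing the subword $ULU$, i.e. a composition with two adjacent components $(q,q+1)$; then $(\mu+\delta)_j = q+(l-j) = (q+1)+(l-j-1) = (\mu+\delta)_{j+1}$, so $\mu+\delta$ has a repeated entry and $s_\mu = 0$. Every intermediate composition in the reduction of Corollary \ref{B_comp} again has length $l$ (Lemma \ref{SameSum}) and nonnegative parts (from $n\ge 0$ we get $m-1\ge 0$ and $n+1\ge 0$), so its Schur polynomial is defined; therefore if the reduction yields $B_\mu = \epsilon\,B_\lambda$ then also $s_\mu = \epsilon\,s_\lambda$, and if it yields $B_\mu = 0$ then $s_\mu = 0$. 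In particular $s_\mu = 0$ if and only if $B_\mu = 0$, the reverse implication because $s_\lambda(x_1,\dots,x_l)\ne 0$ for a partition $\lambda$ with at most $l$ parts.

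Both directions then follow. If $\alpha\sim\pm\beta$, i.e. $B_\mu = \pm B_\nu$: when $B_\mu = 0$ then $B_\nu = 0$ and $s_\mu = s_\nu = 0$; otherwise write $B_\mu = \epsilon_\mu B_\lambda$ and $B_\nu = \epsilon_\nu B_{\lambda'}$ with $\lambda,\lambda'$ partitions and $s_\mu = \epsilon_\mu s_\lambda$, $s_\nu = \epsilon_\nu s_{\lambda'}$; from $\epsilon_\mu B_\lambda = \pm\,\epsilon_\nu B_{\lambda'}$, applying both sides to $1$ and using (\ref{bernschur}) gives $s_\lambda = \pm\,s_{\lambda'}$ as symmetric functions, so $\lambda = \lambda'$ and the sign is $+$, whence $\epsilon_\mu = \pm\,\epsilon_\nu$ and $s_\mu = \epsilon_\mu s_\lambda = \pm\,\epsilon_\nu s_{\lambda'} = \pm\,s_\nu$. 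Conversely, if $s_\mu = \pm s_\nu$: when $s_\mu = 0$ then $s_\nu = 0$, so $B_\mu = B_\nu = 0$ and $\alpha\sim 0\sim\beta$, giving $\alpha\sim\pm\beta$; otherwise $B_\mu = \epsilon_\mu B_\lambda$ and $B_\nu = \epsilon_\nu B_{\lambda'}$ with $s_\mu = \epsilon_\mu s_\lambda$, $s_\nu = \epsilon_\nu s_{\lambda'}$, and from $s_\mu = \pm s_\nu$ we get $s_\lambda = \pm\,s_{\lambda'}$, so $\lambda = \lambda'$, so $\epsilon_\mu = \pm\,\epsilon_\nu$, hence $B_\mu = \epsilon_\mu B_\lambda = \pm\,\epsilon_\nu B_{\lambda'} = \pm\,B_\nu$, i.e. $\alpha\sim\pm\beta$. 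The one part requiring genuine care is the middle paragraph: verifying that (\ref{codereln2}) is precisely an adjacent transposition of $\mu+\delta$, that the $ULU$ case forces a repeated entry of $\mu+\delta$, and that every step of the reduction in Corollary \ref{B_comp} is one of these controlled moves performed on compositions with nonnegative parts, so that the sign bookkeeping transfers verbatim from the operator side to the Schur-polynomial side.
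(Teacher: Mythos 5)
Your proof is correct and rests on the same key computation as the paper's: the substitution $(\mu_i,\mu_{i+1})\mapsto(\mu_{i+1}-1,\mu_i+1)$ coming from relation (\ref{codereln2}) is exactly an adjacent transposition of the exponent vector $\mu+\delta$, so $a_{\mu+\delta}$ picks up the same sign that $B_\mu$ does under the commutation relation (\ref{berncomm}), and the $ULU$ case forces a repeated exponent and hence a vanishing determinant. You are more careful than the paper, which essentially stops after this observation; your explicit reduction to partitions via Corollary \ref{B_comp} combined with the linear independence of the Schur polynomials $s_\lambda(x_1,\ldots,x_l)$ is what actually closes both directions of the ``if and only if,'' since the relation $\sim$ is defined by operator equality rather than by the generating moves.
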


\begin{proof}
By definition, $s_\mu(x_1, x_2, \ldots, x_l) = \dfrac{a_{\mu+\delta}}{a_\delta}$ and $s_\nu(x_1, x_2, \ldots, x_l) = \dfrac{a_{\nu+\delta}}{a_\delta}$, so it suffices to show that $a_{\mu+\delta} = a_{\nu+\delta}$ precisely when the codes of $\mu$ and $\nu$ are related.  By the antisymmetry of the determinant, we have
$$\begin{array}{rcl}
a_{\mu+\delta} & = & \phantom{- \,\,\,}
\left|\begin{array}{cccccc}
{x_1}^{\mu_1+l-1} & \cdots & {x_1}^{\mu_i + l - i} & {x_1}^{\mu_{i+1} +l - i - 1} & \cdots & {x_1}^{\mu_l + 0} \\
{x_2}^{\mu_1+l-1} & \cdots & {x_2}^{\mu_i + l - i} & {x_2}^{\mu_{i+1} +l - i - 1} & \cdots & {x_2}^{\mu_l + 0} \\
\vdots && \vdots & \vdots && \vdots \\
{x_l}^{\mu_1+l-1} & \cdots & {x_l}^{\mu_i + l - i} & {x_l}^{\mu_{i+1} +l - i - 1} & \cdots & {x_l}^{\mu_l + 0} \\
\end{array}\right| \\
\\
& = & - \,\,\,
\left|\begin{array}{cccccc}
{x_1}^{\mu_1+l-1} & \cdots & {x_1}^{\mu_{i+1} +l - i - 1} & {x_1}^{\mu_i + l - i} & \cdots & {x_1}^{\mu_l + 0} \\
{x_2}^{\mu_1+l-1} & \cdots & {x_2}^{\mu_{i+1} +l - i - 1} & {x_2}^{\mu_i + l - i} & \cdots & {x_2}^{\mu_l + 0} \\
\vdots && \vdots & \vdots && \vdots \\
{x_l}^{\mu_1+l-1} & \cdots & {x_l}^{\mu_{i+1} +l - i - 1} & {x_l}^{\mu_i + l - i} & \cdots & {x_l}^{\mu_l + 0} \\
\end{array}\right| \\
\\
& = & - \,\,\,
\left|\begin{array}{cccccc}
{x_1}^{\mu_1+l-1} & \cdots & {x_1}^{(\mu_{i+1}-1) +l - i} & {x_1}^{(\mu_i+1) + l - i - 1} & \cdots & {x_1}^{\mu_l + 0} \\
{x_2}^{\mu_1+l-1} & \cdots & {x_2}^{(\mu_{i+1}-1) +l - i} & {x_2}^{(\mu_i+1) + l - i - 1} & \cdots & {x_2}^{\mu_l + 0} \\
\vdots && \vdots & \vdots && \vdots \\
{x_l}^{\mu_1+l-1} & \cdots & {x_l}^{(\mu_{i+1}-1) +l - i - 1} & {x_l}^{(\mu_i+1) + l - i - 1} & \cdots & {x_l}^{\mu_l + 0} \\
\end{array}\right| \\
& = & - \,\,\, a_{(\mu_1, \ldots, \mu_{i-1}, \mu_{i+1} - 1, \mu_i + 1, \mu_{i+2}, \ldots, \mu_l) + \delta} .
\end{array}
$$
Hence, the indices, $\mu$, of $a_{\mu+\delta}$ satisfy the same commutation relation as the indices of the Bernstein operators in equation \ref{berncomm}.  Thus, by the definition of the equivalence relation $\sim$ on signed codes of compositions, $a_{\mu+\delta} = a_{\nu+\delta}$ if and only if the codes of $\mu$ and $\nu$ are related, so the result holds.
\end{proof}

\begin{definition}
Let $\mu$ be a composition of $n$ of length $l$.  Define the {\it Schur function indexed by the composition $\mu$}, $s_\mu$, to be the unique symmetric function in $\displaystyle\oplus_{k=0}^l \Lambda_k$ whose restriction to $x_{l+1} = x_{l+2} = \cdots = 0$ is the Schur polynomial $s_\mu(x_1, x_2, \ldots, x_l)$.
\end{definition}

Note that if $\mu = \lambda$ is a partition, we obtain recover the classical definition of Schur functions.  Note also that in $\Lambda$ there is not a unique symmetric function whose restriction to $x_{l+1} = x_{l+2} = \cdots = 0$ is $s_\mu(x_1, x_2, \ldots, x_l)$.  In fact, given any such function $s_\mu$, any element of the coset $s_\mu + <e_{l+1}, e_{l+2}, \ldots >$ will satisfy this condition, where $e_m = \sum_{1\leq i_1 < i_2 < \cdots < i_m} (x_{i_1}x_{i_2} \cdots x_{i_m})$ is the elementary symmetric function.

\begin{theorem}\label{s_comp}
Let $\mu$ and $\nu$ be any two compositions of length $l$ with codes $\alpha$ and $\beta$.  Then $s_\mu = \pm s_\nu$ if and only if $\alpha \sim \pm \beta$ as described in Sections \ref{sec:codereln} and \ref{sec:compos}.
\end{theorem}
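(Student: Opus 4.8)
The plan is to derive the statement from Lemma~\ref{SchurLem} by passing between a symmetric function and its restriction to the $l$ variables $x_1,\dots,x_l$. Recall that $s_\mu$ is \emph{defined} as the unique element of $\bigoplus_{k=0}^{l}\Lambda_k$ whose restriction to $x_{l+1}=x_{l+2}=\cdots=0$ is the Schur polynomial $s_\mu(x_1,\dots,x_l)$, and likewise for $s_\nu$; so the content is to show that the equality $s_\mu=\pm s_\nu$ of symmetric functions is equivalent to the equality $s_\mu(x_1,\dots,x_l)=\pm s_\nu(x_1,\dots,x_l)$ of polynomials, after which Lemma~\ref{SchurLem} finishes the job.

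For the ``only if'' direction I would observe that setting $x_{l+1}=x_{l+2}=\cdots=0$ is a ring homomorphism $\Lambda\to\mathbb{C}[x_1,\dots,x_l]^{S_l}$, so it carries $s_\mu=\pm s_\nu$ to the polynomial identity $s_\mu(x_1,\dots,x_l)=\pm s_\nu(x_1,\dots,x_l)$; Lemma~\ref{SchurLem} then gives $\alpha\sim\pm\beta$. For the ``if'' direction, starting from $\alpha\sim\pm\beta$, Lemma~\ref{SchurLem} already yields $s_\mu(x_1,\dots,x_l)=\pm s_\nu(x_1,\dots,x_l)$ as polynomials in $l$ variables. Since $\pm s_\nu$ is an element of $\bigoplus_{k=0}^{l}\Lambda_k$ whose restriction to $x_{l+1}=x_{l+2}=\cdots=0$ equals $s_\mu(x_1,\dots,x_l)$, the uniqueness built into the definition of $s_\mu$ forces $s_\mu=\pm s_\nu$.

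The one point deserving explicit attention --- it is exactly what legitimizes the definition of $s_\mu$, and hence the last step above --- is the injectivity of the restriction map $\bigoplus_{k=0}^{l}\Lambda_k\to\mathbb{C}[x_1,\dots,x_l]^{S_l}$. This is standard: in each degree $k\le l$ the Schur functions $s_\lambda$ with $|\lambda|=k$ form a basis of $\Lambda_k$ and restrict to linearly independent symmetric polynomials in $x_1,\dots,x_l$ (every such $\lambda$ has at most $l$ parts), or equivalently the ideal $\langle e_{l+1},e_{l+2},\dots\rangle$ meets $\bigoplus_{k=0}^{l}\Lambda_k$ only in $0$, as noted in the remark following the definition of $s_\mu$. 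With this in hand the two paragraphs above constitute the whole argument; there is no genuinely new obstacle, since all of the combinatorics has already been absorbed into Lemma~\ref{SchurLem}. I would also point out that, combined with Corollary~\ref{B_comp}, this shows every $s_\mu$ equals $\pm s_\lambda$ for a genuine partition $\lambda$ of $|\mu|$ of length $l$, which is the analog of Bernstein's theorem~(\ref{bernschur}) in the setting of Schur functions indexed by compositions.
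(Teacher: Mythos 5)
Your proposal is correct and follows essentially the same route as the paper: both directions reduce to Lemma~\ref{SchurLem} via restriction to $x_{l+1}=x_{l+2}=\cdots=0$, with the converse direction appealing to the uniqueness clause in the definition of $s_\mu$ inside $\bigoplus_{k=0}^{l}\Lambda_k$. Your explicit remark on the injectivity of the restriction map on $\bigoplus_{k=0}^{l}\Lambda_k$ is a point the paper leaves implicit (it is only touched on in the remark following the definition), but it is the same argument.
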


\begin{proof}
Suppose that $s_\mu = \pm s_\nu$.  Then restricting to $x_{l+1} = x_{l+2} = \cdots = 0$ we obtain that $s_\mu(x_1, x_2, \ldots, x_l) = \pm s_\nu(x_1, x_2, \ldots, x_l)$.  Hence by Lemma \ref{SchurLem}, $\alpha \sim \pm \beta$.

Suppose that $\alpha \sim \pm \beta$.  Then by Lemma \ref{SchurLem},
$$s_\mu(x_1, x_2, \ldots, x_l) = \pm s_\nu(x_1, x_2, \ldots, x_l).$$
Now since the Schur functions $s_\mu$ and $s_\nu$ are both uniquely determined by the same Schur polynomial $s_\mu(x_1, x_2, \ldots, x_l) = \pm s_\nu(x_1, x_2, \ldots, x_l)$ (up to the sign), we have that $s_\mu = \pm s_\nu$.
\end{proof}

\begin{theorem}\label{genbern}
Given any composition $\mu = (\mu_1, \mu_2, \ldots, \mu_l)$, $$s_\mu = B_{\mu_1}B_{\mu_2} \cdots B_{\mu_l} \cdot 1.$$
\end{theorem}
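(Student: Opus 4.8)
The plan is to deduce Theorem~\ref{genbern} from three facts already in hand: the code–reduction statement of Corollary~\ref{B_comp}, the characterization of equality of composition-indexed Schur functions in Theorem~\ref{s_comp}, and Bernstein's identity~(\ref{bernschur}) for partitions. First I would split on whether $B_\mu$ vanishes: letting $\mu$ have length $l$ and code $\alpha$, Corollary~\ref{B_comp} says either $B_\mu=0$ or $B_\mu=\pm B_\lambda$ for some partition $\lambda$ of $|\mu|$ of length $l$.

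In the case $B_\mu=\pm B_\lambda$, unwinding the definition of the equivalence $\sim$ (equality of Bernstein operators) gives $\alpha\sim\pm\beta$, where $\beta$ is the code of $\lambda$; since $\mu$ and $\lambda$ both have length $l$, Theorem~\ref{s_comp} yields $s_\mu=\pm s_\lambda$. On the other hand $B_\mu\cdot 1=\pm B_\lambda\cdot 1=\pm s_\lambda$ by~(\ref{bernschur}), where $s_\lambda$ indexed by the partition $\lambda$ agrees with the classical Schur function, as noted after the definition. Combining, $s_\mu=\pm s_\lambda=B_\mu\cdot 1$, which is the claim.

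The remaining case $B_\mu=0$ is where the little bit of extra work lies: I must show $s_\mu=0$ as well, since Theorem~\ref{s_comp} only records when two composition Schur functions are equal up to a sign. Here I would revisit the proof of Corollary~\ref{B_comp}: when $B_\mu=0$, repeated application of Theorem~\ref{BetaK} (which uses only relation~(\ref{codereln2})) produces a finite chain of compositions, all of length $l$, whose codes are related by single instances of~(\ref{codereln1}) or~(\ref{codereln2}), terminating at a composition $\mu'$ whose reduced code contains the subword $ULU$. As observed in Section~\ref{sec:codereln}, a $ULU$ between two consecutive $U$'s of the code forces adjacent parts $(\mu'_i,\mu'_{i+1})=(n,n+1)$; consequently columns $i$ and $i+1$ of the matrix defining $a_{\mu'+\delta}$ both carry the exponent $n+l-i$, so $a_{\mu'+\delta}=0$ and hence $s_{\mu'}=0$. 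Applying Lemma~\ref{SchurLem} to each step of the chain (a single relation changes $s$ by at most a sign), we get $s_\mu=\pm s_{\mu'}=0=B_\mu\cdot 1$, completing the argument.

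I expect the only obstacle to be bookkeeping in the vanishing case: confirming that the reduction witnessing $B_\mu=0$ really is a chain of the basic single-step relations (so that Lemma~\ref{SchurLem} applies term by term) and that it terminates at a $ULU$, equivalently at an adjacent pair $(n,n+1)$, which makes the determinant $a_{\mu'+\delta}$ visibly degenerate. Both points are immediate from the proofs of Theorem~\ref{BetaK}/Corollary~\ref{B_comp} and of Lemma~\ref{SchurLem}, so no genuinely new idea is needed, and the nonvanishing case is a one-line combination of results already established.
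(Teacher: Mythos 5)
Your proposal is correct and follows essentially the same route as the paper: split on whether $B_\mu=0$, handle the nonzero case via Corollary~\ref{B_comp}, equation~(\ref{bernschur}), and Theorem~\ref{s_comp}, and handle the vanishing case by showing $a_{\mu+\delta}=0$. The only difference is that where the paper simply asserts the columns of $({x_i}^{\mu_j+\delta_j})$ are linearly dependent when $B_\mu=0$, you justify this by tracing the reduction chain to a $ULU$, i.e.\ an adjacent pair $(n,n+1)$ forcing two equal columns --- a harmless (and welcome) elaboration rather than a different argument.
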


\begin{proof}
If $B_\mu = 0$, then the columns of $({x_i}^{\mu_j+\delta_j})$ will be linearly dependent and $a_{\mu+\delta} = 0$, so $s_\mu = 0$.  Thus $B_\mu \cdot 1 = 0 = s_\mu$.

If $B_\mu \neq 0$, then by Corollary \ref{B_comp}, $B_\mu = \pm B_\lambda$ for some partition $\lambda$.  Hence $B_\mu \cdot 1 = \pm B_\lambda \cdot 1$.  By equation (\ref{bernschur}) we know that $B_\lambda \cdot 1 = s_\lambda$.  Finally, $s_\lambda = \pm s_\mu$ by Theorem \ref{s_comp}, where the sign is the same as above, since both come from the relation between the codes of $\mu$ and $\lambda$.  Therefore $B_\mu \cdot 1 = \pm B_\lambda \cdot 1 = \pm s_\lambda = s_\mu$.
\end{proof}

This theorem tells us that when the Bernstein operators act in an arbitrary order on 1, i.e. when they are indexed by a composition, then the result is the Schur function indexed by that same composition.  This generalizes Berstein's theorem (\ref{bernschur}), which gives the same result when the Bernstein operators act in nonincreasing order on 1, i.e. when they are indexed by a partition, obtaining the classical Schur functions (indexed by partitions) as the result.

\section{Schur $Q$-functions}\label{sec:schur}

We now turn our attention to Schur $Q$-functions and show some analogous results using codes of \textit{strict} partitions.  The Schur $Q$-functions are denoted by $Q_\lambda$, where $\lambda$ is a \textit{strict partition}, i.e. $\lambda_1 > \lambda_2 > \cdots > \lambda_l$.  The functions $Q_\lambda$ where $\lambda$ is a strict partition are an orthogonal basis of $\Lambda^-$, the ring of symmetric functions generated by the odd degree power sums $p_{2k+1}$.  Then from \cite{j1} we have that the twisted vertex operator $Y(t)$ given by
$$Y(t)=\sum_{n\in \mathbb{Z}}Y_nt^{-n}=exp\left(\sum_{k\geq1}\frac{2t^{-2k+1}}{k}p_{2k-1}\right)
exp\left(-\sum_{k\geq1}t^{2k-1}\frac{\partial}{\partial
p_{2k-1}}\right),
$$
which acts on $\Lambda^-$, satisfies the following two results.  The first is that $Y_{-\lambda}$ generates $Q_\lambda$ in the same way that the Bernstein operator $B_\lambda$ generates the Schur function $s_\lambda$.  That is
\begin{equation}\label{Qlambd}
Q_\lambda = Y_{-\lambda_1} Y_{-\lambda_2} \cdots Y_{-\lambda_l} \cdot 1,
\end{equation}
where $\lambda = (\lambda_1, \lambda_2, \ldots, \lambda_l)$ is a strict partition and $Q_\lambda$ is the Schur $Q$-function indexed by $\lambda$.  The second result is that the operators $Y_n$ anticommute.
\begin{equation}\label{Qcomm}
Y_nY_m = - Y_mY_n
\end{equation}
for any integers $m$ and $n$.  In particular, this means that $Y_nY_n = - Y_nY_n$, so $Y_nY_n = Y_n^2 = 0.$

Now we use the relationship among the $Y_n$'s to define a new equivalence relation on the set of signed codes of compositions.  We define $\alpha \sim \pm \beta$ if and only if $Y_{-\mu} = \pm Y_{-\nu},$ and $\alpha \sim 0$ if and only if $Y_{-\mu}=0,$ where $\alpha$ is the code of the composition $\mu$ and $\beta$ is the code of the composition $\nu$.  As in Section \ref{sec:codereln}, this is an equivalence relation since there is a one to one correspondence between a composition $\mu$ and its code $\alpha$.  Throughout this section we will refer only to this new relation.

Since the operators $Y_n$ anticommute, we know that $Y_\mu=0$ whenever $\mu$ contains any repetitions, so we can restrict ourselves to compositions $\mu$ with distinct components, but we want to recapture this in terms of codes alone.

\begin{proposition}\label{qcodeprop}
For any positive integer $k$, we have
\begin{eqnarray}
R\left(L^kUR^k\right) &\sim& \left(L^{k-1}UR^{k-1}\right)R\label{qcodereln1}\\
U\left(L^kUR^k\right) &\sim& -\left(L^kUR^k\right)U\label{qcodereln2}.
\end{eqnarray}
\end{proposition}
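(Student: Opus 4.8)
The plan is to mirror the proof of Proposition~\ref{codeprop}, with the Bernstein commutation relation (\ref{berncomm}) replaced throughout by the anticommutation relation (\ref{Qcomm}). Relation (\ref{qcodereln1}) will again be immediate from the cancellation $LR \sim RL \sim \phi$: inserting or deleting a consecutive $L$ and $R$ retraces the same edge and changes neither the path nor the underlying composition $\mu$, hence does not change $Y_{-\mu}$, so the cancellation is valid for the present relation as well. Granting this, $R(L^kUR^k) = RL^kUR^k \sim L^{k-1}UR^k$ since the leading $RL$ cancels, while $(L^{k-1}UR^{k-1})R = L^{k-1}UR^k$; both sides reduce to the same code and there is nothing further to check.

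For relation (\ref{qcodereln2}), which is the code form of (\ref{Qcomm}), I would take the two-row composition $\mu = (n,m)$ with $n<m$ and $k=m-n$, together with its transpose $\nu = (m,n)$. Because $Y_{-\mu}=Y_{-n}Y_{-m}=-Y_{-m}Y_{-n}=-Y_{-\nu}$ by (\ref{Qcomm}), the definition of $\sim$ gives that the code $\alpha$ of $\mu$ is equivalent to $-1$ times the code $\beta$ of $\nu$. Tracing the right-hand boundary exactly as in Proposition~\ref{codeprop} (the analogue of the picture there, now recording a pure transposition rather than the shift $(n,m)\mapsto(m-1,n+1)$), these reduced codes are
$$\alpha = \ldots UUU\,R^{m}\,U\,L^{k}\,U\,RRR\ldots,\qquad \beta = \ldots UUU\,R^{n}\,U\,R^{k}\,U\,RRR\ldots .$$

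The decisive step is to align $\alpha$ and $\beta$ so that the portion that differs is exactly the subword in (\ref{qcodereln2}). Using $m=n+k$, I would write $\alpha = \ldots R^{n+k}\,(UL^{k}UR^{k})\,RRR\ldots$, taking the trailing $R^{k}$ from the tail of $R$'s, and write $\beta$ in the \emph{unreduced} form $\ldots R^{n+k}\,(L^{k}UR^{k}U)\,RRR\ldots$; this is legitimate since $R^{n+k}L^{k}\sim R^{n}$ recovers the reduced code $\ldots R^{n}UR^{k}U\,RRR\ldots=\beta$. Now the prefix $\ldots R^{n+k}$ and the suffix $RRR\ldots$ agree on the two sides, so $\alpha\sim-\beta$ restricts to the subword relation $UL^{k}UR^{k}\sim-L^{k}UR^{k}U$, which is (\ref{qcodereln2}).

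The step I expect to be the main obstacle is precisely this alignment. Comparing the reduced codes directly yields only the shorter relation $R^{k}UL^{k}U\sim-UR^{k}U$, and one cannot pass from it to the stated form by blindly appending letters and cancelling: right-multiplying by $R^{k}$ alters the lengths of the two rows being exchanged and so relates codes of genuinely different compositions, producing a false statement. The correct device is to hold both the prefix $R^{n+k}$ and the suffix fixed and absorb the entire discrepancy into a single $R^{k}L^{k}\sim\phi$ cancellation inside $\beta$. Equivalently, one verifies directly that replacing the subword $UL^{k}UR^{k}$ by $L^{k}UR^{k}U$ in any reduced code interchanges two adjacent rows of lengths $a$ and $a-k$ while leaving every other row, and the net horizontal displacement across the subword, unchanged; the two codes then differ by exactly one instance of (\ref{Qcomm}), which both confirms the relation and pins down the sign.
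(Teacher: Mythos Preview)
Your proposal is correct and follows essentially the same approach as the paper: the paper simply states that the proof is ``similar to Proposition~\ref{codeprop}'' with (\ref{Qcomm}) in place of (\ref{berncomm}), and displays the picture of the boundary paths of $(n,m)$ and $(m,n)$ that encode exactly the subwords $UL^kUR^k$ and $L^kUR^kU$ you isolate. Your extended caution about the alignment step is more scrupulous than the paper itself---in the proof of Proposition~\ref{codeprop} the paper freely multiplies both sides of the raw subword relation by $L^k$ on the left and $R^{k-1}$ on the right without further comment---so your concern, while not misplaced, is already implicitly handled by the paper's subword convention and the harmless cancellation $RL\sim LR\sim\phi$.
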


The proof of this proposition is similar to Proposition \ref{codeprop}.  Relation (\ref{qcodereln2}) follows from the commutation identity (\ref{Qcomm}) and corresponds to the code given by the altered path below.
\begin{center}
{\Yvcentermath1 $
\setlength{\unitlength}{.8cm}
\begin{picture}(6,1.5) 
\linethickness{0.075mm}
\put(0,1.1){\makebox(3,1)[b]{$n$}}
\put(0,-2.2){\makebox(6,1)[t]{$m$}}
\put(3,.15){\makebox(3,1)[b]{$k$}}
\put(0,0){\makebox(3,1){$\cdots$}}
\put(0,-1){\makebox(3,1){$\cdots$}}
\put(3,-1){\makebox(3,1){$\cdots$}}
\put(0,1){\line(1,0){3}}
\put(0,0){\line(1,0){6}}
\put(0,-1){\line(1,0){6}}
\put(0,1){\line(0,-1){2}}
\put(3,1){\line(0,-1){2}}
\put(6,0){\line(0,-1){1}}
\linethickness{.5mm}
\put(3,1){\line(1,0){3}}
\put(3,1){\line(0,-1){1}}
\put(3,0){\line(1,0){3}}
\put(6,0){\line(0,-1){1}}
\put(6,1){\circle*{.18}}
\put(6,-1){\circle*{.18}}
\end{picture}
\qquad
\rightarrow
\qquad
\begin{picture}(6,1.5)
\linethickness{0.075mm}
\put(0,1.1){\makebox(6,1)[b]{$m$}}
\put(0,-2.1){\makebox(3,1)[t]{$n$}}
\put(3,-1.1){\makebox(3,1)[t]{$k$}}
\put(0,0){\makebox(3,1){$\cdots$}}
\put(0,-1){\makebox(3,1){$\cdots$}}
\put(3,0){\makebox(3,1){$\cdots$}}
\put(0,1){\line(1,0){6}}
\put(0,0){\line(1,0){6}}
\put(0,-1){\line(1,0){3}}
\put(0,1){\line(0,-1){2}}
\put(3,1){\line(0,-1){2}}
\put(6,1){\line(0,-1){1}}
\linethickness{.5mm}
\put(6,1){\line(0,-1){1}}
\put(3,0){\line(1,0){3}}
\put(3,0){\line(0,-1){1}}
\put(3,-1){\line(1,0){3}}
\put(6,1){\circle*{.18}}
\put(6,-1){\circle*{.18}}
\end{picture}
$}
\end{center}
\vspace{1cm}

Note that unlike Proposition \ref{codeprop}, Proposition \ref{qcodeprop} says that in this setting we only decrease the index $k$ when we permute $\left(L^kUR^k\right)$ past an $R$.  However, like the previous case, the sign only changes when we permute $\left(L^kUR^k\right)$ past a $U$.


\begin{lemma}\label{QSameSum}
Suppose that the codes $\alpha$ and $\beta$ of two compositions $\mu$ and $\nu$ differ only by one of the relations (\ref{qcodereln1}) or (\ref{qcodereln2}).  Then $\mu$ and $\nu$ have the same number of components, $l$, and the same sum, $\mu_1+\mu_2+\cdots +\mu_l = \nu_1+\nu_2+\cdots +\nu_l$.
\end{lemma}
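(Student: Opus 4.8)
The plan is to mirror the proof of Lemma \ref{SameSum}, handling the two relations of Proposition \ref{qcodeprop} in turn.

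First, suppose $\alpha$ and $\beta$ differ by relation (\ref{qcodereln1}). Then the two sides $RL^kUR^k$ and $L^{k-1}UR^{k-1}R$ are, after performing the one available $RL$ cancellation on the left, both equal to the reduced word $L^{k-1}UR^k$; they describe the same path. Hence $\mu = \nu$ and the conclusion is immediate.

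Next, suppose $\alpha$ and $\beta$ differ by relation (\ref{qcodereln2}). First I would compute the net displacement of each side: $UL^kUR^k$ and $L^kUR^kU$ each move the path by two units upward and zero units horizontally. Therefore the altered subword changes only the two rows of the diagram corresponding to its two $U$-steps: $\mu$ and $\nu$ both have length $l$, and $\mu_i = \nu_i$ for every $i$ outside some pair $\{j, j+1\}$ with $1 \le j \le l-1$.

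It then remains to compare the two changed components. This is precisely the configuration drawn in the picture preceding the lemma, which exhibits relation (\ref{qcodereln2}) as the code form of the anticommutation $Y_{-n}Y_{-m} = -Y_{-m}Y_{-n}$ from (\ref{Qcomm}): the pair $(\mu_j, \mu_{j+1}) = (n, m)$ is replaced by the pair $(\nu_j, \nu_{j+1}) = (m, n)$. Thus $\mu_j + \mu_{j+1} = n + m = \nu_j + \nu_{j+1}$, and since all other components agree, the two compositions have the same sum. The only step needing genuine care — the main obstacle, such as it is — is the middle one: matching the subword $L^kUR^k$ inside $\alpha$ to a specific adjacent pair of rows of the Young diagram, so that the displacement count really does pin down which two components of $\mu$ are affected. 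Once the geometry of the traced path is lined up with the picture, as in the corresponding discussion for the Bernstein operators, the identification of the change as the swap $(n,m) \leftrightarrow (m,n)$ is immediate, and sum-preservation is trivial — in contrast with the Bernstein setting, where $(n,m) \to (m-1,n+1)$ and one must actually observe that $n+m = (m-1)+(n+1)$.
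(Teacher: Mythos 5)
Your proof is correct and takes essentially the same approach as the paper: the paper simply declares the proof identical to that of Lemma \ref{SameSum}, and your argument is exactly that proof adapted to the $Q$-setting (same-path observation for (\ref{qcodereln1}); net-displacement count plus the swap $(n,m)\mapsto(m,n)$ from the anticommutation picture for (\ref{qcodereln2})).
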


The proof is identical to Lemma \ref{SameSum}.

\begin{theorem}\label{BetaK+J}
Let $\mu$ be any compositions of $m$ with code $\alpha$.  Suppose that $\alpha$ can be written in the form
$$\alpha = \ldots \beta_3\beta_2\beta_1L^kU\gamma_1\gamma_2\gamma_3 \ldots$$
where $\beta = \ldots\beta_3\beta_2\beta_1$ consists only of $R$'s and $U$'s and $\beta_1 = U$.  Let $j$ be the smallest integer such that $\beta_{k+j}\ldots \beta_2\beta_1$ has $k$ $R$'s.  Then $Y_{-\mu} = 0$ if $\beta_{k+j+1}=U$ and $Y_{-\mu} = (-1)^jY_{-\nu}$ if $\beta_{k+j+1}=R$, where $\nu$ is the composition of $m$ with code given by
$$\ldots \beta_{k+j+1}U\beta_{k+j}\ldots \beta_2\beta_1L^k \gamma_1\gamma_2\ldots$$
\end{theorem}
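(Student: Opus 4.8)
The plan is to mirror the proof of Theorem~\ref{BetaK}, using Proposition~\ref{qcodeprop} in place of Proposition~\ref{codeprop} and paying close attention to the way the index $k$ now behaves. First I would rewrite $\alpha$ as
\[
\alpha = \ldots \beta_3\beta_2\beta_1 L^kU\gamma_1\gamma_2\gamma_3\ldots
 \sim \ldots \beta_3\beta_2\beta_1\left(L^kUR^k\right)L^k\gamma_1\gamma_2\gamma_3\ldots,
\]
inserting $R^kL^k$ just after the block $L^kU$ (the $R^k$ pairs with $U$ to form the pattern $L^kUR^k$ of \eqref{qcodereln2}, and the trailing $L^k$ is carried along). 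Now I commute the block $\left(L^kUR^k\right)$ leftward past $\beta_1,\beta_2,\ldots$ one letter at a time. By \eqref{qcodereln1} and \eqref{qcodereln2}, passing a $U$ leaves $k$ unchanged but flips the sign, while passing an $R$ decreases $k$ by one and leaves the sign unchanged. So after moving the block past $\beta_1,\ldots,\beta_{k+j}$ --- where $j$ is, as in the statement, the least integer for which $\beta_{k+j}\ldots\beta_1$ contains exactly $k$ copies of $R$ --- the exponent has dropped from $k$ to $0$ and the accumulated sign is $(-1)^j$, $j$ being the number of $U$'s among $\beta_1,\ldots,\beta_{k+j}$. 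That is,
\[
\alpha \sim (-1)^j \ldots \beta_{k+j+1}\left(\beta_{k+j}L^0UR^0\right)\beta_{k+j-1}\ldots\beta_1 L^k\gamma_1\gamma_2\ldots
 = (-1)^j \ldots \beta_{k+j+1}\left(\beta_{k+j}U\right)\beta_{k+j-1}\ldots\beta_1 L^k\gamma_1\gamma_2\ldots.
\]

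Wait --- I need the block, once $k$ reaches $0$, to be positioned so that it sits immediately to the \emph{right} of $\beta_{k+j+1}$, i.e. between $\beta_{k+j+1}$ and $\beta_{k+j}$; this is exactly what ``$j$ smallest such that $\beta_{k+j}\ldots\beta_1$ has $k$ $R$'s'' guarantees, because the $(k+j)$-th letter from the right is the one whose passage brings the count of traversed $R$'s up to $k$. At that moment the block is $L^0UR^0=U$ and it is adjacent to $\beta_{k+j+1}$. Then I split into cases on $\beta_{k+j+1}$ exactly as in Theorem~\ref{BetaK}: if $\beta_{k+j+1}=U$, the resulting code contains the subword $ULU$ (here $\beta_{k+j+1}$, then the moved $U$... actually the relevant subword is $UU$ coming from $\beta_{k+j+1}U$, and since $Y_nY_n=0$ any code with two consecutive... no). Let me reconsider: it is $\beta_{k+j+1}$ followed immediately by the moved $U$, so the code has the subword $\beta_{k+j+1}U$. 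If $\beta_{k+j+1}=U$ this is $UU$; a code with consecutive $U$'s corresponds to a composition with two equal components $(n,n)$, hence $Y_{-\mu}$ has a repeated factor $Y_{-n}Y_{-n}=0$, giving $Y_{-\mu}=0$. If $\beta_{k+j+1}=R$, then $\beta_{k+j+1}U=RU$, and pulling the moved $U$ to the left of $\beta_{k+j+1}$ just turns that $R$ into a $U$ (no cancellation needed since there is no $L$ here --- unlike the Bernstein case, which used $RLU\sim U$); this produces the code
\[
\ldots\beta_{k+j+1}U\beta_{k+j}\ldots\beta_2\beta_1 L^k\gamma_1\gamma_2\ldots
\]
displayed in the statement. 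So $Y_{-\mu}=(-1)^j Y_{-\nu}$ with $\nu$ the composition having that code, and Lemma~\ref{QSameSum} shows $\nu$ is again a composition of $m$ with the same length.

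The main obstacle I expect is bookkeeping rather than conceptual: getting the index $j$ and the stopping condition precisely right. In the Bernstein case every letter the block passes decrements $k$, so the block travels exactly $k-1$ steps and lands after $\beta_{k+1}$; here only $R$'s decrement $k$, so the block travels a variable distance (through $k$ $R$'s and $j$ $U$'s), which is why $j$ must be introduced as ``the smallest integer such that $\beta_{k+j}\ldots\beta_1$ has $k$ $R$'s.'' I would also double-check the two edge behaviours: that the final $R$-passage indeed brings the $R$-count to $k$ (so $\beta_{k+j}$ is an $R$, and the block stops just left of it having become $U$), and that when $k=0$ initially (i.e.\ the $L^kU$ block is just $U$) the statement still reads correctly, with $j$ simply the number of $U$'s before reaching... a case one might handle separately or note is vacuous since reduced codes have $k\geq 1$ at the leftmost $L$. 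The substitution $RU\to UR$-style move replacing an $R$ by $U$ is slightly different from the Bernstein argument's $RLU\sim U$, so I would make sure \eqref{qcodereln1} in the form $R(L^kUR^k)\sim(L^{k-1}UR^{k-1})R$ specialized at $k=0$, namely $RU\sim UR$... actually at $k=0$ that reads $R(U)\sim(L^{-1}UR^{-1})R$ which is nonsense, so the final repositioning of $U$ past $\beta_{k+j+1}$ must instead be justified directly: moving the block $U$ (which is $L^0UR^0$) left past a single letter uses the $k=0$ instance, and I should present that last step as an elementary observation about the path (a $U$ immediately followed by nothing to cancel, adjacent to $\beta_{k+j+1}$) rather than as an application of the proposition. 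Everything else follows the template of Theorem~\ref{BetaK} verbatim.
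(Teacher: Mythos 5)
Your proposal is correct and follows essentially the same route as the paper's proof: insert $R^kL^k$ to form $\beta\left(L^kUR^k\right)L^k\gamma$, commute the block leftward via Proposition \ref{qcodeprop} (sign flips when passing a $U$, exponent drops when passing an $R$) until it becomes $L^0UR^0=U$ sitting immediately right of $\beta_{k+j+1}$, then split into cases on $\beta_{k+j+1}$ using $UU\sim 0$ and conclude with Lemma \ref{QSameSum}. The only slip is the phrase ``pulling the moved $U$ to the left of $\beta_{k+j+1}$ just turns that $R$ into a $U$'' --- in this setting the $U$ is simply \emph{inserted} after $\beta_{k+j+1}$ with no further move or replacement needed, exactly as your displayed code (and the theorem statement) already shows.
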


\begin{proof}
By the minimality of $j$, we have that $\beta_{k+j} = R$.  By applying Proposition \ref{qcodeprop} $k+j$ times we have that
\begin{eqnarray*}
\alpha &=& \ldots \beta_3\beta_2\beta_1L^kU\gamma_1\gamma_2\gamma_3 \ldots \\
&\sim& \ldots \beta_3\beta_2\beta_1\left(L^kUR^k\right)L^k\gamma_1\gamma_2\gamma_3 \ldots \\
&\sim& (-1)^j \ldots \beta_{k+j+1}\left(L^0UR^0\right)\beta_{k+j}\ldots \beta_2\beta_1L^k\gamma_1\gamma_2\gamma_3 \ldots \\
&=& (-1)^j \ldots \beta_{k+j+1}U\beta_{k+j}\ldots \beta_2\beta_1L^k\gamma_1\gamma_2\gamma_3 \ldots
\end{eqnarray*}
since $j$ is the number of $U$'s in $\beta_{k+j}\ldots \beta_2\beta_1$.  If $\beta_{k+j+1} = U$, then the above code contains the subword $\beta_{k+j+1}U = UU \sim 0$, so $\alpha\sim 0$, so $Y_{-\mu} = 0$.  If $\beta_{k+j+1} = R$, then we have that $Y_{-\mu} = (-1)^j Y_{-\nu}$ for the partition $\nu$ which satisfies the conditions of the theorem.  In particular, $\nu$ is also a partition of $m$ by Lemma \ref{QSameSum}.
\end{proof}

\begin{corollary}\label{QCGLem}
Let $\lambda = (\lambda_1, \lambda_2, \ldots, \lambda_l)$ be a strict partition and $n$ be any integer with $n < \lambda_1$.  Let $k = \lambda_1 - n$, and let $\zeta$ be the letter immediately left of the $k^{\text{th}}$ $R$ left of the rightmost $U$ in the code of $\lambda$.
\begin{itemize}
\item If $\zeta=U$, then $Y_{-n}Y_{-\lambda} = 0$.
\item If $\zeta = R$, then $Y_{-n}Y_{-\lambda} = (-1)^{j+1} Y_{-\nu}$, where $j$ is the number of $U$'s between the rightmost $U$ and $\zeta$, and $\nu$ is the strict partition whose code is the code of $\lambda$ with $U$ inserted after $\zeta$.
\end{itemize}
\end{corollary}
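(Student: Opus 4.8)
The plan is to obtain this corollary from Theorem~\ref{BetaK+J} by the same specialization that produced Corollary~\ref{CGLem} from Theorem~\ref{BetaK}, namely taking $\mu = (n, \lambda_1, \lambda_2, \ldots, \lambda_l)$ so that $Y_{-\mu} = Y_{-n}Y_{-\lambda}$. First I would record the shape of the code of $\mu$. Since $n < \lambda_1$, prepending the part $n$ produces a single exceedance, and because codes are defined exactly as before in this section, the code of $\mu$ has the same form used in the Bernstein case,
\[
\alpha = \ldots \beta_3\beta_2\beta_1 L^k U \gamma_1\gamma_2\gamma_3\ldots,
\]
where $\beta_1$ is the rightmost $U$ in the code of $\lambda$, the word $\beta\gamma = \ldots\beta_3\beta_2\beta_1\gamma_1\gamma_2\ldots$ is the code of $\lambda$, $\gamma = RRR\ldots$, and $k = \lambda_1 - n$.

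Next I would line up the data of the theorem with that of the corollary. In Theorem~\ref{BetaK+J} the index (call it $J$) is the least integer for which $\beta_{k+J}\ldots\beta_2\beta_1$ contains $k$ copies of $R$; as $\beta_1 = U$, this segment then has exactly $k$ $R$'s and $J$ $U$'s, and by minimality $\beta_{k+J} = R$ is the $k$th $R$ to the left of the rightmost $U$. Hence the letter $\zeta$ immediately to its left is precisely $\beta_{k+J+1}$, so the theorem's case split $\beta_{k+J+1} = U$ versus $\beta_{k+J+1} = R$ is exactly the corollary's case split on $\zeta$, giving $Y_{-n}Y_{-\lambda} = 0$ in the first case.

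The step I expect to require the most care is reconciling the exponents and verifying strictness of $\nu$. The theorem yields $(-1)^J$, whereas the corollary asserts $(-1)^{j+1}$; since the $J$ $U$'s counted above include the rightmost $U$ $\beta_1$ itself, while the corollary's $j$ counts only the $U$'s strictly between the rightmost $U$ and $\zeta$, we have $J = j+1$ and the two signs agree. For the partition $\nu$, the theorem gives the code $\ldots\beta_{k+J+1}U\beta_{k+J}\ldots\beta_2\beta_1 L^k\gamma_1\gamma_2\ldots$; because $\gamma = RRR\ldots$, the suffix $L^k\gamma$ reduces again to $RRR\ldots$, so this is exactly the code of $\lambda$ with one $U$ inserted immediately after $\zeta$, as claimed. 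Finally I would confirm that $\nu$ is strict: by minimality $\beta_{k+J} = R$, so the new $U$ is inserted between the two $R$'s $\zeta$ and $\beta_{k+J}$ and creates no pair of consecutive $U$'s, while Lemma~\ref{QSameSum} preserves the length and the size; hence $\nu$ is a strict partition of the same integer, and $Y_{-n}Y_{-\lambda} = (-1)^{j+1}Y_{-\nu}$.
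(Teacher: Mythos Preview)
Your proposal is correct and follows exactly the approach indicated in the paper: specialize Theorem~\ref{BetaK+J} to $\mu = (n,\lambda_1,\ldots,\lambda_l)$, noting that the code of $\lambda$ decomposes as $\beta\gamma$ with $\gamma = RRR\ldots$. The paper's own proof is a single sentence to this effect; your version simply spells out the identification $\zeta = \beta_{k+J+1}$, the sign reconciliation $J = j+1$, the cancellation of $L^k$ against $\gamma$, and the strictness check, all of which are accurate.
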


This corollary is the analog of Corollary \ref{CGLem}, and similarly follows from Theorem \ref{BetaK+J} since the code $\alpha$ of $\lambda$ can be written in the form $\alpha = \beta\gamma$ in the notation of the theorem.

\begin{corollary}\label{Y_comp}
Let $\mu$ be any composition of $m$ with code $\alpha$.  Then either $Y_{-\mu} = 0$ or $Y_{-\mu} = \pm Y_{-\lambda}$ for some strict partition $\lambda$ of $m$ with the same length as $\mu$.
\end{corollary}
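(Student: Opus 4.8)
The plan is to mirror the proof of Corollary~\ref{B_comp}, inducting on the number of $U$'s lying to the right of the leftmost $L$ in the reduced form of $\alpha$, but invoking Theorem~\ref{BetaK+J} in place of Theorem~\ref{BetaK}. Two preliminaries are needed. First, $LR \sim RL \sim \phi$ in the $Y$-setting for exactly the reason it held for the Bernstein operators: a consecutive $L,R$ (or $R,L$) leaves the boundary path, hence the composition, unchanged. So we may pass to a reduced code freely, and such reductions alter neither the underlying composition nor (by Lemma~\ref{QSameSum} applied to each relation used along the way) its length $l$ or size $m$. Second, a composition is a partition exactly when its reduced code contains no $L$, and such a partition is strict exactly when its code has no subword $UU$; if the code does contain $UU$, then $\mu$ has a repeated part $a$, and $Y_{-a}Y_{-a}=Y_{-a}^2=0$ forces $Y_{-\mu}=0$. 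This last point is the only genuine new feature compared with the Bernstein analog, where \emph{every} partition was acceptable.

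For the base case, suppose $\alpha$ has no $L$, or has an $L$ but no $U$ to the right of the leftmost one. Then the part of the reduced code to the right of that leftmost $L$ (if there is one) consists solely of $L$'s and $R$'s, so every $L$ cancels against $R$'s further to the right, since the code ends in $\ldots RRR$; hence $\mu$ is a partition of $m$ of length $l$, and by the second preliminary either $\mu$ is strict and we take $\lambda = \mu$, or $Y_{-\mu}=0$.

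For the inductive step, reducedness forces the leftmost maximal $L$-block to be immediately followed by a $U$ and immediately preceded by an $R$ or a $U$; a preceding $R$ would reduce against the first $L$, so it is a $U$. Hence $\alpha = \ldots\beta_3\beta_2\beta_1 L^kU\gamma_1\gamma_2\ldots$ with $\beta$ built from $R$'s and $U$'s and $\beta_1=U$, exactly the hypothesis of Theorem~\ref{BetaK+J}; that theorem yields either $Y_{-\mu}=0$, or $Y_{-\mu}=(-1)^jY_{-\nu}$ where $\nu$ is a composition of $m$ of length $l$ with code $\ldots\beta_{k+j+1}U\beta_{k+j}\ldots\beta_1 L^k\gamma_1\gamma_2\ldots$. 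I would then verify that the induction measure strictly decreases: $\beta$, even after one $R$ is changed to $U$, still contains no $L$, so every $L$ of $\nu$ lies in the $L^k$-block or in $\gamma$, and after reducing $L^k$ against any leading $R$'s of $\gamma$, everything to the right of whatever leftmost $L$ survives is a suffix of $\gamma_1\gamma_2\ldots$, which has strictly fewer $U$'s than $U\gamma_1\gamma_2\ldots$ did — the $U$ that used to follow $L^k$ has been absorbed into $\beta$. Applying the inductive hypothesis to $\nu$ then gives that $Y_{-\nu}$ is $0$ or $\pm Y_{-\lambda}$ for some strict partition $\lambda$ of $m$ of length $l$, and the result follows since $Y_{-\mu} = (-1)^j Y_{-\nu}$.

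The main obstacle I anticipate is this decrease check together with the choice of induction parameter: unlike in Corollary~\ref{B_comp}, the number of $L$'s in the code need not drop when Theorem~\ref{BetaK+J} is applied — the block $L^kU$ persists unreduced whenever $\gamma_1=U$ — so one cannot simply induct on the number of $L$'s, and tracking the $U$'s to the right of the leftmost $L$ becomes essential, exactly as the remark preceding the proof of Corollary~\ref{B_comp} foreshadows.
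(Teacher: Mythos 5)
Your proof is correct and follows essentially the same route as the paper's: induction on the number of $U$'s to the right of the leftmost $L$, applying Theorem~\ref{BetaK+J} at each step, together with the observation that counting $L$'s fails as an induction parameter here since that count need not drop. You also correctly supply the one detail the paper leaves implicit in deferring to the proof of Corollary~\ref{B_comp}, namely that a terminal partition with a repeated part forces $Y_{-\mu}=0$ via $Y_{-a}^2=0$, which is what lets you conclude with a \emph{strict} partition.
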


The proof of this corollary is the same as the proof presented for Corollary \ref{B_comp}, that is by induction on the number of $U$'s right of the leftmost $L$ in $\alpha$.  In this case though we can not use induction on the number of $L$'s in $\alpha$ since the number of $L$'s in the code do not decrease when we apply Theorem \ref{BetaK+J}, unlike Theorem \ref{BetaK}.

Similarly to Corollary \ref{B_comp}, this corollary provides a simple way to compute $Y_{-\mu} \cdot 1$ in terms of Schur $Q$-functions for any composition $\mu$, since $Y_{-\lambda}\cdot 1 = Q_\lambda$ by equation (\ref{Qlambd}) for any strict partition $\lambda$.

In fact, given any composition $\mu$, the number of times we have to apply Theorem \ref{BetaK+J} to get  $Y_{-\mu} = \pm Y_{-\lambda}$ for some strict partition $\lambda$ is less than or equal to the number of $U$'s right of the leftmost $L$ in the code of $\mu$, which is the largest $i$ such that $\mu_i<\mu_{i+1}$, i.e. the position of the last exceedance in $\mu$.

The above corollary follows from the previous theorem, but since in this case we know that the $Y_{-n}$ anticommute, we actually have the following stronger statement.

\begin{proposition}\label{sigmaprop}
Let $\mu$ be any composition of $m$ with length $l$.  Then either $Y_{-\mu} = 0$ or $Y_{-\mu} = \sgn(\sigma) Y_{-\sigma(\mu)}$ for any permutation $\sigma \in S_l$.
\end{proposition}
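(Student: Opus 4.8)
The plan is to prove this directly from the anticommutation relation (\ref{Qcomm}), which already describes exactly how the factors of $Y_{-\mu} = Y_{-\mu_1}Y_{-\mu_2}\cdots Y_{-\mu_l}$ rearrange; the code machinery of this section is not needed. First I would reduce to the case of an adjacent transposition $\sigma = s_i = (i,\,i+1)$. For such a $\sigma$, the composition $\sigma(\mu)$ differs from $\mu$ only by interchanging $\mu_i$ and $\mu_{i+1}$, so $Y_{-\sigma(\mu)}$ is obtained from $Y_{-\mu}$ by swapping the $i$-th and $(i+1)$-th factors in the product; applying (\ref{Qcomm}) to these two adjacent factors gives $Y_{-\mu} = -Y_{-\sigma(\mu)} = \sgn(s_i)\,Y_{-\sigma(\mu)}$. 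This is the base case, and note that it needs no hypothesis on the parts of $\mu$.

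For a general $\sigma \in S_l$, I would induct on the number $r = \mathrm{inv}(\sigma)$ of adjacent transpositions in a reduced word for $\sigma$. Writing $\sigma = s_i\sigma'$ with $\mathrm{inv}(\sigma') = r-1$, so that $\sigma(\mu) = s_i(\sigma'(\mu))$, the base case applied to the composition $\sigma'(\mu)$ gives $Y_{-\sigma'(\mu)} = -Y_{-\sigma(\mu)}$, while the inductive hypothesis gives $Y_{-\mu} = \sgn(\sigma')\,Y_{-\sigma'(\mu)}$; multiplying and using $\sgn(\sigma) = -\sgn(\sigma')$ yields $Y_{-\mu} = \sgn(\sigma)\,Y_{-\sigma(\mu)}$. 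Thus the identity $Y_{-\mu} = \sgn(\sigma)\,Y_{-\sigma(\mu)}$ in fact holds for every composition $\mu$ and every $\sigma \in S_l$.

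Finally I would explain the dichotomy in the statement. If the parts of $\mu$ are not all distinct, say $\mu_i = \mu_j$ with $i < j$, choose $\sigma$ making these two entries adjacent; by the identity just proved, $Y_{-\mu}$ equals $\pm Y_{-\sigma(\mu)}$, and $Y_{-\sigma(\mu)}$ contains the factor $Y_{-\mu_i}Y_{-\mu_i} = Y_{-\mu_i}^2 = 0$ (again by (\ref{Qcomm})), so $Y_{-\mu} = 0$. Otherwise the parts are distinct, the $S_l$-orbit of $\mu$ contains a unique strict partition $\lambda$ obtained by sorting the parts of $\mu$ into decreasing order, and $Y_{-\mu} = \sgn(\sigma)\,Y_{-\sigma(\mu)}$ for all $\sigma$; taking $\sigma$ to be the sorting permutation recovers and refines Corollary \ref{Y_comp}, pinning down both $\lambda$ and the sign. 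As for difficulty, there is no real obstacle here: the entire content is (\ref{Qcomm}), and the only point requiring a little care is the sign bookkeeping, namely the well-definedness of $\sgn$ under different factorizations into adjacent transpositions, and applying one's convention for $\sigma(\mu)$ (whether $\sigma(\mu)_k = \mu_{\sigma(k)}$ or $\mu_{\sigma^{-1}(k)}$) consistently so that composition of permutations matches composition of the reorderings.
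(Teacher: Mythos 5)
Your proposal is correct and follows essentially the same route as the paper's proof: decompose $\sigma$ into adjacent transpositions, apply the anticommutation relation (\ref{Qcomm}) once per transposition to accumulate the sign $\sgn(\sigma)=(-1)^k$, and note that a repeated part forces $Y_{-\mu}=0$ via $Y_n^2=0$. Your added remarks on sign well-definedness and the $\sigma(\mu)$ convention are sensible bookkeeping but do not change the argument.
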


\begin{proof}
The proof of this statement is immediate from (\ref{Qcomm}).  In particular, $Y_{-\mu} = 0$ exactly when $\mu$ has a repeated index.  The second case, $Y_{-\mu} = \sgn(\sigma) Y_{-\sigma(\mu)}$ follows from the fact that any permutation $\sigma$ can be written as a sequence of adjacent transpositions $\sigma = \sigma_1 \sigma_2 \cdots \sigma_k$, and $\sgn(\sigma) = (-1)^k$.  Then $Y_{-\mu} = - Y_{-\sigma_k(\mu)} = + Y_{-\sigma_{k-1}\sigma_k(\mu)} = \cdots = (-1)^k Y_{-\sigma_1 \sigma_2 \cdots \sigma_k (\mu)} = \sgn(\sigma) Y_{-\sigma(\mu)}$.
\end{proof}

This proposition follows immediately from known results.  We include it only to show that Corollary \ref{Y_comp} gives an only slightly less general version of this result using only codes.

\begin{definition}\label{[i]1}
For any strict partition $\lambda$, define $\lambda^{[i]}$ to be the strict partition obtained from the code of $\lambda$ by inserting a $U$ between the $i^{\text{th}}$ pair of consecutive $R$'s.
\end{definition}

In particular, this means that $\lambda^{[i]}$ is the strict partition with the $i^{\text{th}}$ smallest positive integer not already in $\lambda$ inserted into $\lambda$.

\begin{theorem}\label{QThm}
For any strict partition $\lambda = (\lambda_1, \lambda_2, \ldots, \lambda_l)$,
\begin{eqnarray}
Y(t)Q_\lambda &=& \sum_{j=0}^{l}\, \sum_{n=\lambda_{j+1}+1}^{\lambda_{j}-1} \,
(-1)^j \,
t^n \,
Q_{(\lambda_1, \lambda_2, \ldots, \lambda_j, n, \lambda_{j+1} \ldots, \lambda_l)}\label{QThmA} \\
Y(t)Q_\lambda &=& \sum_{i\geq 0} \,
(-1)^{l + |\lambda| - |\lambda^{[i]}| + i} \,
t^{|\lambda^{[i]}| - |\lambda|} \,
Q_{\lambda^{[i]}} \label{QThmB}
\end{eqnarray}
where we take the convention $\lambda_0=\infty$ and $\lambda_{l+1} = -1$.
\end{theorem}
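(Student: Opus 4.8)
The plan is to obtain (\ref{QThmA}) first, essentially for free from the anticommutativity of the $Y_n$, and then to recognize (\ref{QThmB}) as nothing more than a re-indexing of (\ref{QThmA}) in terms of the insertion data recorded by $\lambda^{[i]}$. This mirrors the proof of Theorem \ref{CGCodeThm}, but the full anticommutation relation (\ref{Qcomm}) makes the first half considerably cleaner than in the Bernstein case.

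First I would expand, using $Y(t) = \sum_{n} Y_n t^{-n}$ together with (\ref{Qlambd}),
$$Y(t)Q_\lambda = \sum_{n\in\mathbb{Z}} t^{-n} Y_n Y_{-\lambda_1}\cdots Y_{-\lambda_l}\cdot 1 = \sum_{p\in\mathbb{Z}} t^{p}\, Y_{-(p,\lambda_1,\ldots,\lambda_l)}\cdot 1,$$
where $p=-n$ is the value inserted at the front, so it suffices to evaluate $Y_{-(p,\lambda)}\cdot 1$ for each $p$. If $p=\lambda_i$ for some $i$, the word $(p,\lambda)$ has a repeated part and $Y_{-(p,\lambda)}=0$ since $Y_n^2=0$. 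Otherwise there is a unique $j$ with $0\le j\le l$ and $\lambda_j > p > \lambda_{j+1}$ (with $\lambda_0=\infty$, $\lambda_{l+1}=-1$), and Proposition \ref{sigmaprop} applied to the adjacent transpositions carrying $p$ into its sorted slot gives
$$Y_{-(p,\lambda)}\cdot 1 = (-1)^{j}\, Y_{-(\lambda_1,\ldots,\lambda_j,\,p,\,\lambda_{j+1},\ldots,\lambda_l)}\cdot 1 = (-1)^{j}\, Q_{(\lambda_1,\ldots,\lambda_j,\,p,\,\lambda_{j+1},\ldots,\lambda_l)},$$
since the sorted word is a strict partition. (The same evaluation follows from the code-based Corollary \ref{QCGLem}, in exact parallel with Theorem \ref{CGCodeThm}.) The terms with $p<0$ vanish: such a $p$ is smaller than every part, so it sorts into the last slot and produces $Y_{-p}\cdot 1$ with $-p>0$, which is killed by the vacuum property of $Y$ (the analog of $B_{-m}\cdot 1 = 0$ used in Theorem \ref{CGCodeThm}); this is precisely what the convention $\lambda_{l+1}=-1$ encodes. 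Collecting the surviving terms by the value of $j$, where for fixed $j$ the admissible $p$ run over $\lambda_{j+1}+1\le p\le \lambda_j-1$, yields (\ref{QThmA}).

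To pass from (\ref{QThmA}) to (\ref{QThmB}) I would read the double sum as a single sum over the inserted value $p$, which ranges over all nonnegative integers not occurring in $\lambda$. By Definition \ref{[i]1} and the remark following it, inserting such a value produces exactly $\lambda^{[i]}$, where $i$ is the rank of $p$ among the nonnegative integers absent from $\lambda$; in particular $|\lambda^{[i]}|-|\lambda| = p$, so that $t^{p}=t^{|\lambda^{[i]}|-|\lambda|}$ and $Q_{(\ldots,p,\ldots)} = Q_{\lambda^{[i]}}$. The only real content is the sign. The key identity is $j = l - p + i$, obtained by counting $\{0,1,\ldots,p-1\}$ two ways: it contains exactly $i$ integers absent from $\lambda$ (the insertable values below $p$) and exactly $l-j$ parts of $\lambda$ (those smaller than $p$), hence $p = i + (l-j)$. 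Substituting $p = |\lambda^{[i]}|-|\lambda|$ turns $(-1)^{j}$ into $(-1)^{\,l+|\lambda|-|\lambda^{[i]}|+i}$, which is exactly the sign appearing in (\ref{QThmB}).

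The main obstacle is the bookkeeping in this last step: pinning down the $\lambda^{[i]}$-indexing at the boundary $i=0$ (insertion of the value $0$, where $\lambda^{[0]}=\lambda$ as a $Q$-index) and reconciling it with the convention $\lambda_{l+1}=-1$, together with verifying the counting identity $j=l-p+i$ exactly rather than merely modulo $2$. Everything else is a direct consequence of anticommutativity and the definitions.
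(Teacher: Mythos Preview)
Your proof is correct and follows essentially the same route as the paper: expand $Y(t)Q_\lambda$, evaluate $Y_{-(p,\lambda)}\cdot 1$ term by term, show the zero terms are exactly $p\in\{\lambda_1,\ldots,\lambda_l\}$, and then convert the double sum (\ref{QThmA}) into (\ref{QThmB}) via the identity $i=p-(l-j)$, i.e.\ $j=l+|\lambda|-|\lambda^{[i]}|+i$. The only difference is cosmetic: where the paper invokes the code-based Corollary~\ref{QCGLem} to sort $(p,\lambda)$ and read off the sign $(-1)^j$, you appeal directly to Proposition~\ref{sigmaprop} (pure anticommutation), and you explicitly dispose of the $p<0$ terms via the vacuum annihilation $Y_m\cdot 1=0$ for $m>0$, which the paper leaves implicit in the convention $\lambda_{l+1}=-1$.
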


\begin{proof}
From the definition of $Y(t) = \sum_{n\in \mathbb{Z}} Y_nt^{-n}$, and equation (\ref{Qlambd}), we have that $Y(t)Q_\lambda = \sum_{n\in\mathbb{Z}}Y_nt^{-n}(Y_{-\lambda_1}Y_{-\lambda_2}\cdots Y_{-\lambda_l}\cdot 1)$.  So it is sufficient to consider the coefficient of $t^{-n}$, $Y_{-n}Y_{-\lambda_1}Y_{-\lambda_2}\cdots Y_{-\lambda_l}$.

If $n > \lambda_1$, then $Y_{-n}Y_{-\lambda} = Y_{-(n,\lambda)}$ is already in decreasing order.  In fact, since $(n, \lambda) = \lambda^{(n-l)}$, we can write $Y_{-n}Y_{-\lambda} \cdot 1= Y_{-(n,\lambda)} \cdot 1 = Y_{\lambda^{(n-l)}} \cdot 1 = Q_{\lambda^{(n-l)}}$.

If $n \leq \lambda_1$, then we write the code of $\lambda$ in the form
$$\alpha = \ldots \beta_3\beta_2\beta_1RRR\ldots$$
where $\beta_1 = U$.  Let $j$ be the smallest integer such that $\beta_{k+j}\ldots \beta_2\beta_1$ has $k$ $R$'s, where $k = \lambda_1 - n$.

From Corollary \ref{QCGLem}, if $\beta_{k+j+1} = U$, then $\alpha \sim 0$, so $Y_{-n}Y_{-\lambda} = 0$.  Since there are $j$ $U$'s in $\beta_{k+j}\ldots \beta_2\beta_1$, $\beta_{k+j+1}$ is the $(j+1)^{\text{th}}$ $U$ from the right in $\alpha$.  This naturally divides the $r_1(\lambda)$ $R$'s left of $\beta_1$, the rightmost $U$ in $\alpha$, into $r_{j+1}(\lambda)$ $R$'s left of $\beta_{k+j+1} = U$ and $k$ $R$'s right of $\beta_{k+j+1}$.  Then we have that $n = \lambda_1 - k = r_1(\lambda) - k = \left( r_{j+1}(\lambda) + k \right) - k = r_{j+1}(\lambda) = \lambda_{j+1}$.  Thus $Y_{-n}Y_{-\lambda} = 0$ exactly when $n=\lambda_i$ for some $i$, that is, when $n$ is already a component of $\lambda$, as we expect from (\ref{Qlambd}) and (\ref{Qcomm}).

If $n\neq \lambda_i$ for any $i$, then from Corollary \ref{QCGLem}, $Y_{-n}Y_{-\lambda} = (-1)^j Y_{-\nu}$, where $\nu$ is the strict partition with code
$$\alpha' = \ldots \beta_{k+j+1}U\beta_{k+j} \ldots \beta_2\beta_1RRR\ldots$$
Comparing this to the code $\alpha$ of $\lambda$, we have that $r_{j+1}(\nu) = n = \lambda_1 - k$, $r_i(\lambda) = r_i(\lambda)$ for all $i \leq j$, and $r_{i+1}(\nu) = r_i(\lambda)$ for all $i > j$.  Thus $\nu = (\lambda_1, \lambda_2, \ldots, \lambda_j, n, \lambda_{j+1}, \ldots, \lambda_l)$, and thus (\ref{QThmA}) holds.

There are $n$ $R$'s left of the inserted $U$ in $\alpha'$.  Each $U$ corresponding to a component of $\nu$ will be immediately after an $R$ ($RU$), since $\nu$ is a strict partition.  Thus $l-j$ of the $R$'s left of the inserted $U$ will be immediately before a $U$, so $n - (l-j) = n-l+j$ of the $R$'s left of the inserted $U$ will be immediately before an $R$.  This includes $\beta_{k+j+1}$, the $R$ immediately before the inserted $U$, so this $U$ is inserted into the $(n-l+j)^{\text{th}}$ position from the left.  So $\nu = \lambda_1, \lambda_2, \ldots, \lambda_j, n, \lambda_{j+1}, \ldots, \lambda_l) = \lambda^{[n-l+j]}$.

Let $i = n-l+j$.  Then $\lambda^{[i]} = \nu$, $|\lambda^{[i]}| - |\lambda| = n$, and $l+|\lambda| - |\lambda^{[i]}| + i = l - n + (n-l+j) = j$.  Finally, notice that as $n$ runs over all summands which contribute to (\ref{QThmA}), $i$ will run over all nonnegative integers ($i=0$ corresponds to the case $n=0$), so (\ref{QThmB}) holds.
\end{proof}

In light of the results in this section, particularly Proposition \ref{sigmaprop} and Theorem \ref{QThm}, one might ask if there is an intuitive way to define a function $Q_\mu$ indexed by compositions such that $Q_\lambda$ is the Schur $Q$-function when $\lambda$ is a partition which satisfies analogous statements to Theorem \ref{s_comp} and Theorem \ref{genbern}.  That is, can we generalize Schur $Q$-functions as we generalized Schur functions in Section \ref{sec:genschur}?

Unfortunately the definition of the Schur $Q$-functions does not lend itself to generalization in this manner as the definition of the Schur functions we used in Section \ref{sec:genschur} did.  However, even if we could generalize the definition of the Schur $Q$-functions in an intuitive way, the analogous result to Theorem \ref{s_comp} would tell us that $Q_\mu = \sgn(\sigma) Q_{\sigma(\mu)}$ for any permutation $\sigma\in S_l$ by Proposition \ref{sigmaprop}.  This implies that $Q_\mu = Q_\lambda$ exactly when $\mu$ is a rearrangement of $\lambda$.  In other words, the result we would get from generalizing Schur $Q$-functions (unlike when we generalized Schur functions) would be trivial.

If we choose as our definition $Q_\mu = Y_{-\mu} \cdot 1$ for any composition $\mu$, then we recover the two results mentioned in the previous paragraph immediately.  Namely that $Q_\mu = \sgn(\sigma) Q_{\sigma(\mu)}$ for any permutation $\sigma\in S_l$ (by Proposition \ref{sigmaprop}) and $Q_\mu = Q_\lambda$ exactly when $\mu$ is a rearrangement of $\lambda$.  The drawback here is that we {\it define} $Q_\mu$ to satisfy the same defining relation as the Schur $Q$-functions, whereas in Section \ref{sec:genschur} we were able to generalize Schur functions and {\it prove} the relationship between the new function $s_\mu$ and $B_\mu \cdot 1$ in order to better understand the latter.  So this definition, while valid, is not terribly useful or illustrative.

\section{Shifted Codes}\label{sec:shift}

In this section we will relate codes to {\it shifted codes} of strict partitions and use these to study the analog of the Bernstein operators for Schur $Q$-functions.

\begin{definition}
Define the \textit{shifted code} of a strict partition $\lambda$ to be the infinite sequence of letters $R$ and $U$ obtained by tracing along the bottom-right edge of the shifted Young diagram of shape $\lambda$ in the fourth quadrant together with the positive $x$-axis, starting at the bottom-right corner of the leftmost box on the bottom row of the diagram.
\end{definition}

\begin{example}\label{ShiftEx1}
For the strict partition $\lambda = (4,2,1)$, the path described above is shown in bold below.
\begin{center}
\setlength{\unitlength}{.8cm}
\begin{picture}(6,3) 
\linethickness{0.075mm}
\put(0,3){\line(1,0){6}}
\put(0,2){\line(1,0){4}}
\put(1,1){\line(1,0){2}}
\put(2,0){\line(1,0){1}}
\put(0,3){\line(0,-1){1}}
\put(1,3){\line(0,-1){2}}
\put(2,3){\line(0,-1){3}}
\put(3,3){\line(0,-1){3}}
\put(4,3){\line(0,-1){1}}
\linethickness{.5mm}
\put(3,2){\line(0,-1){2}}
\put(3,2){\line(1,0){1}}
\put(4,3){\line(0,-1){1}}
\put(4,3){\line(1,0){2}}
\put(3,0){\circle*{.18}}
\end{picture}
\end{center}
Then the shifted code of $\lambda$ is given by  $\alpha = UURURRR\ldots$
\end{example}


Using the tools we developed to study codes of compositions, we will now show how the shifted code of a strict partition can be obtained directly from the code of that partition.

\begin{definition}
Given a strict partition $\lambda$ with code $\alpha$, replace each $U$ in $\alpha$ with $UL$ and use the fact that $LR \sim \phi$ to cancel wherever possible.  Call the resulting sequence of letters $R$, $L$, and $U$ the \textit{preshifted code}
of $\lambda$.
\end{definition}

If the infinite prefix ``$\ldots ULULU$" is removed from the preshifted code of a strict partition, then you obtain exactly the shifted code of that partition.  The reason for this is that replacing each $U$ with $UL$ makes the diagram left aligned along the line $y = - x$ rather than the negative $y$-axis.

\begin{example}
For the empty partition $\lambda = \phi$, the code of $\lambda$ is $\ldots UUURRR\ldots$ from the diagram:
\begin{center}
\setlength{\unitlength}{.8cm}
\begin{picture}(2,2) 
\linethickness{.5mm}
\put(0,2){\line(0,-1){2}}
\put(0,2){\line(1,0){2}}
\end{picture}
\end{center}
The preshifted code is $\ldots ULULURRR\ldots$ from the diagram:
\begin{center}
\setlength{\unitlength}{.8cm}
\begin{picture}(4,4) 
\linethickness{.5mm}
\multiput(2.5,1)(.35,-.35){3}{\circle*{.1}}
\put(0,4){\line(1,0){4}}
\put(0,4){\line(0,-1){1}}
\put(0,3){\line(1,0){1}}
\put(1,3){\line(0,-1){1}}
\put(1,2){\line(1,0){1}}
\put(2,2){\line(0,-1){1}}
\end{picture}
\end{center}
And the shifted code is $RRR\ldots$ from the diagram:
\begin{center}
\setlength{\unitlength}{.8cm}
\begin{picture}(3,0) 
\linethickness{.5mm}
\multiput(2.35,0)(.35,0){3}{\circle*{.1}}
\put(0,0){\line(1,0){2}}
\put(0,0){\circle*{.18}}
\end{picture}
\end{center}
\end{example}

Shifted codes can provide an alternative (but equivalent) definition of $\lambda^{[i]}$ to the one given in Definition
\ref{[i]1}.

\begin{definition}
For any strict partition $\lambda$, define $\lambda^{[i]}$ to be the strict partition obtained from the shifted code of $\lambda$ by replacing the $i^{\text{th}}$ $R$ from the left in the shifted code of $\lambda$ by $U$.
\end{definition}

Notice the similarity to Definition \ref{(i)}, which defined $\lambda^{(i)}$ the exact same way using codes of partitions rather than shifted codes of strict partitions.

\begin{definition}
Given a composition $\mu$ with code $\alpha$, replace each $U$ in $\alpha$ with $UL$.  Call the resulting sequence of letters $R$, $L$, and $U$ the \textit{preshifted code} of $\mu$.  Remove the prefix ``$\ldots ULULU$" to obtain the \textit{shifted code} of $\mu$.
\end{definition}

\begin{example}
For the composition $\mu = (2,3,1)$ the shifted code is obtained from the path shown below in bold.
\begin{center}
\setlength{\unitlength}{.8cm}
\begin{picture}(6,3) 
\linethickness{0.075mm}
\put(0,3){\line(1,0){6}}
\put(0,2){\line(1,0){4}}
\put(1,1){\line(1,0){3}}
\put(2,0){\line(1,0){1}}
\put(0,3){\line(0,-1){1}}
\put(1,3){\line(0,-1){2}}
\put(2,3){\line(0,-1){3}}
\put(3,2){\line(0,-1){2}}
\put(4,2){\line(0,-1){1}}
\linethickness{.5mm}
\put(3,1){\line(0,-1){1}}
\put(3,1){\line(1,0){1}}
\put(4,2){\line(0,-1){1}}
\put(2,2){\line(1,0){2}}
\put(2,3){\line(0,-1){1}}
\put(2,3){\line(1,0){4}}
\put(3,0){\circle*{.18}}
\end{picture}
\end{center}
Then the shifted code of $\mu$ is given by $\alpha = URULLURRR\ldots$ and the preshifted code of $\mu$ is $\ldots ULULUURULLURRR\ldots$
\end{example}

We can now use the relationship among the $Y_n$'s to define yet another equivalence relation, this one on the set of signed shifted codes of compositions.  We define $\alpha \sim \pm \beta$ if and only if $Y_{-\mu} = \pm Y_{-\nu}$, and $\alpha \sim 0$ if and only if $Y_{-\mu} = 0$, where $\alpha$ is the shifted code of the composition $\mu$ and $\beta$ is the code of the composition $\nu$.  As before, this will be an equivalence relation since there is a one to one correspondence between a composition $\mu$ and its shifted code $\alpha$.

\begin{proposition}\label{shcodeprop}
For any positive integer $k$, we have
\begin{eqnarray}
R\left(L^{k+1}UR^k\right) &\sim& \left(L^kUR^k\right)R\label{shcodereln1}\\
U\left(L^{k+1}UR^k\right) &\sim& -\left(L^kUR^k\right)U.\label{shcodereln2}
\end{eqnarray}
\end{proposition}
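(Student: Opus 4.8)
The plan is to prove Proposition \ref{shcodeprop} by the same two-stage strategy used for Propositions \ref{codeprop} and \ref{qcodeprop}: first distill a single fundamental swap relation on shifted codes from the anticommutation (\ref{Qcomm}), and then reduce it to the two stated local relations by left/right multiplication together with the cancellation $LR \sim RL \sim \phi$. Concretely, I would apply $Y_{-n}Y_{-m} = -Y_{-m}Y_{-n}$ to a two-part composition, comparing $(n,m)$ with $n<m$ (an exceedance, so its shifted code carries $L$'s) to its swap $(m,n)$. Writing $k=m-n$, the definition of $\sim$ on shifted codes then produces a raw identity between the two boundary words carrying the sign $-1$ inherited from the anticommutation; relation (\ref{shcodereln2}) is the reduced form of this identity, and relation (\ref{shcodereln1}) is its sign-free companion.

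First I would carry out the diagram step. Using shifted Young diagrams with row $i$ indented by $i-1$ and tracing the boundary as in the definition (I have checked that this reproduces the code $UURURRR\ldots$ of Example \ref{ShiftEx1}), the boundary of $(n,m)$ runs $\ldots R^{m-1}\,U\,L^{k+1}\,U\,R\ldots$, so it literally contains the left-hand block $U(L^{k+1}UR^k)$ of (\ref{shcodereln2}); the crucial feature is the \emph{extra} $L$, i.e.\ $L^{k+1}$ rather than $L^{k}$, forced by the diagonal indentation of the shifted shape. The boundary of $(m,n)$ is $L$-free, of the form $\ldots R^{n-1}\,U\,R^{k-1}\,U\,R\ldots$, so stripping the common prefix $R^{n-1}$ and the common tail $UR\ldots$ distills the fundamental relation $R^{k}UL^{k+1}\sim -\,UR^{k-1}$. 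I would then multiply this on the left and right by suitable powers of $L$ and $R$, cancelling $LR$-pairs, to reach (\ref{shcodereln2}); since the two diagrams interchange exactly one pair of $U$-steps, the anticommutation supplies the minus sign. Relation (\ref{shcodereln1}) records instead the passage of the block past a single $R$: here no two $U$-steps are transposed, so there is no sign, and the identity is a re-description of the same boundary segment modulo $LR\sim RL\sim\phi$, which is exactly why it is stated without a sign.

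As a cross-check, and an alternative route, I would exploit that the relation $\sim$ is defined identically on ordinary and shifted codes, both through $Y_{-\mu}=\pm Y_{-\nu}$, so the preshifting map $U\mapsto UL$ (followed by $LR$-cancellation) sends the already-proven relations (\ref{qcodereln1}) and (\ref{qcodereln2}) to valid relations on shifted codes, which after reduction should reproduce (\ref{shcodereln1}) and (\ref{shcodereln2}). The hard part, and the step I expect to demand the most care, is precisely this exponent bookkeeping. The clean symmetric blocks $L^{i}UR^{i}$ of Proposition \ref{qcodeprop} acquire one extra $L$ per $U$ under preshifting, becoming the asymmetric $L^{k+1}UR^{k}$, and reconciling the outcome with the stated relations hinges on tracking the $LR$-cancellations at both block boundaries and on handling the semi-infinite $R$-tail correctly: a one-step shift in that tail is invisible, so an off-by-one in the $R$-exponent is the error to guard against. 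Landing this accounting exactly on $L^{k+1}UR^{k}$ and $L^{k}UR^{k}$ is the real content of the argument; once the fundamental relation is pinned down, the passage to the general local relations is routine multiplication and cancellation, just as in Proposition \ref{codeprop}.
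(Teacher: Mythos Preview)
Your approach is exactly the paper's: it too derives the relations from the anticommutation $Y_{-n}Y_{-m}=-Y_{-m}Y_{-n}$ by comparing the shifted boundary paths of $(n,m)$ and $(m,n)$ with $k=m-n$, and then notes (as you do in your cross-check) that the resulting local rule is literally the rule of Proposition~\ref{codeprop} reindexed by $k\mapsto k+1$.

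One warning about the exponent bookkeeping you rightly flag as the delicate step. Your own (correct) fundamental relation $R^{k}UL^{k+1}\sim -\,UR^{k-1}$, after left/right padding and $LR$-cancellation, yields
\[
U\bigl(L^{k+1}UR^{k}\bigr)\ \sim\ -\bigl(L^{k}UR^{\,k-1}\bigr)U,
\]
with $R^{k-1}$, not $R^{k}$, on the right; and similarly for the $R$-relation. This is consistent with the paper's own diagram (the relevant segment of the $(m,n)$ path is labeled $k-1$), with the paper's remark that the shifted relation is ``identical'' to Proposition~\ref{codeprop} under $k\mapsto k+1$, and with the requirement that both sides have the same net displacement $(-1,2)$ so the block can be iterated in Theorem~\ref{shBetaK}. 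The printed exponent $R^{k}$ on the right-hand sides of (\ref{shcodereln1})--(\ref{shcodereln2}) is a typographical slip; do not try to force your computation to land there.
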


Again the proof is similar to Proposition \ref{codeprop}.  Relation \ref{shcodereln2} follows from the commutation identity (\ref{Qcomm}) and corresponds to the shifted code given by the altered path below, where $k = m - n$ as before.
\begin{center}
{\Yvcentermath1 $
\setlength{\unitlength}{.8cm}
\begin{picture}(7,1.5) 
\linethickness{0.075mm}
\put(0,1.1){\makebox(3,1)[b]{$n$}}
\put(1,-2.2){\makebox(6,1)[t]{$m$}}
\put(3,.15){\makebox(4,1)[b]{$k+1$}}
\put(3,1.1){\makebox(3,1)[b]{$k$}}
\put(0,0){\makebox(3,1){$\cdots$}}
\put(1,-1){\makebox(3,1){$\cdots$}}
\put(4,-1){\makebox(3,1){$\cdots$}}
\put(0,1){\line(1,0){3}}
\put(0,0){\line(1,0){7}}
\put(1,-1){\line(1,0){6}}
\put(0,1){\line(0,-1){1}}
\put(1,0){\line(0,-1){1}}
\put(3,1){\line(0,-1){1}}
\put(4,0){\line(0,-1){1}}
\put(7,0){\line(0,-1){1}}
\linethickness{.5mm}
\put(3,1){\line(1,0){3}}
\put(3,1){\line(0,-1){1}}
\put(3,0){\line(1,0){4}}
\put(7,0){\line(0,-1){1}}
\put(6,1){\circle*{.18}}
\put(7,-1){\circle*{.18}}
\end{picture}
\qquad
\rightarrow
\qquad
\begin{picture}(7,1.5)
\linethickness{0.075mm}
\put(0,1.1){\makebox(6,1)[b]{$m$}}
\put(1,-2.1){\makebox(3,1)[t]{$n$}}
\put(4,-1.1){\makebox(2,1)[t]{$k-1$}}
\put(4,-2.15){\makebox(3,1)[t]{$k$}}
\put(0,0){\makebox(3,1){$\cdots$}}
\put(1,-1){\makebox(3,1){$\cdots$}}
\put(3,0){\makebox(3,1){$\cdots$}}
\put(0,1){\line(1,0){6}}
\put(0,0){\line(1,0){6}}
\put(1,-1){\line(1,0){3}}
\put(0,1){\line(0,-1){1}}
\put(1,0){\line(0,-1){1}}
\put(3,1){\line(0,-1){1}}
\put(4,0){\line(0,-1){1}}
\put(6,1){\line(0,-1){1}}
\multiput(7,1)(0,-.2){10}{\line(0,-1){.1}} 
\multiput(7,1)(-.2,0){5}{\line(-1,0){.1}} 
\multiput(7,0)(-.2,0){5}{\line(-1,0){.1}} 
\multiput(6,0)(0,-.2){5}{\line(0,-1){.1}} 
\linethickness{.5mm}
\put(6,1){\line(0,-1){1}}
\put(4,0){\line(1,0){2}}
\put(4,0){\line(0,-1){1}}
\put(4,-1){\line(1,0){3}}
\put(6,1){\circle*{.18}}
\put(7,-1){\circle*{.18}}
\end{picture}
$}
\end{center}
\vspace{1cm}

Notice that the relation on shifted codes of compositions in Proposition \ref{shcodeprop} is identical to the relation on codes of compositions in Proposition \ref{codeprop}.  The two propositions give the same relation;  the only difference being that the index $k$ in Proposition \ref{codeprop} has been replaced by $k+1$ in Proposition \ref{shcodeprop} to preserve the identity $k = m - n$.

With this identification we can prove the shifted code analog of each result in Section \ref{sec:schur} in exactly the same way as each corresponding result in Section \ref{sec:compos}.  We include the statements of these results for completeness.


\begin{lemma}\label{shSameSum}
Suppose that the codes $\alpha$ and $\beta$ of two compositions $\mu$ and $\nu$ differ only by one of the relations (\ref{shcodereln1}) or (\ref{shcodereln2}).  Then $\mu$ and $\nu$ have the same number of components, $l$, and the same sum, $\mu_1+\mu_2+\cdots +\mu_l = \nu_1+\nu_2+\cdots +\nu_l$.
\end{lemma}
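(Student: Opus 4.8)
The plan is to follow the proof of Lemma~\ref{SameSum} almost verbatim, since the relations (\ref{shcodereln1}) and (\ref{shcodereln2}) have the same form as (\ref{codereln1}) and (\ref{codereln2}) (with the index $k$ shifted by one). I would split into two cases according to which of the two relations distinguishes $\alpha$ from $\beta$, and in each case read the relevant geometric data straight off the picture accompanying Proposition~\ref{shcodeprop}.

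First I would dispose of relation (\ref{shcodereln1}). As with (\ref{codereln1}), the two sides are merely two descriptions of one and the same path: after cancelling all adjacent $LR$ and $RL$ pairs via $LR \sim RL \sim \phi$, both sides reduce to the identical reduced word. Consequently the shifted Young diagram, and hence the composition itself, is unchanged, so in fact $\mu = \nu$. In particular $\mu$ and $\nu$ trivially have the same length $l$ and the same sum.

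The substantive case is relation (\ref{shcodereln2}). The key observation, read directly from the bold path in the picture for Proposition~\ref{shcodeprop}, is that each side of (\ref{shcodereln2}) contains exactly two $U$ steps and that both sides have the same net displacement, namely one unit to the left and two units up. Because the prefix and suffix surrounding the altered subword agree by the definition of $\sim$ on subsequences, the two $U$ steps of each side must encode the same two (adjacent) rows of the diagram, while every other row is left untouched. Hence $\mu$ and $\nu$ share the same length $l$ and differ only in two neighboring components, say in positions $j$ and $j+1$.

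It then remains to check that those two components have equal sum, and here the figure does the work: relation (\ref{shcodereln2}) is the code form of the anticommutation $Y_{-n}Y_{-m} = -Y_{-m}Y_{-n}$ coming from (\ref{Qcomm}), which simply transposes the two parts, so the altered components of $\mu$ and $\nu$ are $(n,m)$ and $(m,n)$ with $k = m-n$. Since $n+m = m+n$, these have the same sum, giving $\mu_1 + \cdots + \mu_l = \nu_1 + \cdots + \nu_l$. I expect the only delicate point to be the bookkeeping in this middle case, namely correctly matching the two $U$ steps of (\ref{shcodereln2}) to the rows $j$ and $j+1$ and verifying that the intervening horizontal runs translate into the parts $(n,m)$ and $(m,n)$; but once the picture is read exactly as in Lemma~\ref{SameSum}, this is immediate.
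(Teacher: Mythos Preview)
Your proposal is correct and follows exactly the approach the paper intends: the paper's entire proof is the single line ``The proof is identical to Lemma~\ref{SameSum},'' and you have faithfully carried out that identification, correctly adapting the final step to the shifted setting (where the two altered parts are $(n,m)$ and $(m,n)$ via $Y_{-n}Y_{-m}=-Y_{-m}Y_{-n}$, rather than $(n,m)$ and $(m-1,n+1)$ as in the Bernstein case). Your reading of the net displacement from the picture accompanying Proposition~\ref{shcodeprop} is the right move, since the displayed formula for (\ref{shcodereln2}) in the paper appears to contain a typo (the right-hand side should read $(L^kUR^{k-1})U$ to match both the picture and the claimed identity with Proposition~\ref{codeprop} under $k\mapsto k+1$).
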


The proof is identical to Lemma \ref{SameSum}.

\begin{theorem}\label{shBetaK}
Let $\mu$ be any composition of $m$ with shifted code $\alpha$.  Suppose that $\alpha$ can be written in the form
$$\alpha = \beta_t \ldots \beta_3\beta_2\beta_1L^kU \gamma_1 \gamma_2 \gamma_3 \ldots$$
where $\beta = \beta_t \ldots \beta_3\beta_2\beta_1$ consists only of $R$'s and $U$'s and $\beta_1=U$.
\begin{itemize}
\item If $\beta_k = U$, then $Y_{-\mu} = 0$.
\item If $\beta_k = R$, then $Y_{-\mu} = (-1)^j Y_{-\nu}$, where $j$ is the number of $U$'s in $\beta_{k-1}\ldots \beta_2\beta_1$ and $\nu$ is the composition of $m$ with shifted code given by
$$\beta_t \ldots \beta_{k+1}U\beta_{k-1}\ldots \beta_3\beta_2\beta_1L^{k-1} \gamma_1 \gamma_2 \gamma_3 \ldots$$
\end{itemize}
\end{theorem}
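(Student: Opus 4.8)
The plan is to transcribe the proof of Theorem \ref{BetaK} almost word for word, replacing the Bernstein operator $B_\mu$ throughout by $Y_{-\mu}$, Proposition \ref{codeprop} by its shifted counterpart Proposition \ref{shcodeprop}, and Lemma \ref{SameSum} by Lemma \ref{shSameSum}. The key enabling fact is the observation recorded just before Lemma \ref{shSameSum}: on blocks of the form $L^aUR^{a-1}$ the shifted relations of Proposition \ref{shcodeprop} act exactly as the composition-code relations of Proposition \ref{codeprop}, so that moving such a block left past a single letter of $\beta$ lowers its index by one and flips the sign precisely when that letter is a $U$. Once this identification is in hand, essentially every line of the earlier proof carries over.

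Concretely, I would first expose a movable block: starting from $\alpha = \beta_t\ldots\beta_2\beta_1 L^kU\gamma_1\gamma_2\ldots$ with $\beta_1 = U$, append $R^{k-1}L^{k-1}$ after the $U$, which is trivial by the cancellation of adjacent $R$'s and $L$'s, so that $\alpha \sim \beta_t\ldots\beta_1(L^kUR^{k-1})L^{k-1}\gamma_1\gamma_2\ldots$, exactly as in Theorem \ref{BetaK}. Then I would permute the block $L^kUR^{k-1}$ leftward using Proposition \ref{shcodeprop}, once for each of $\beta_1,\ldots,\beta_{k-1}$. Each pass lowers the block to the next smaller form and contributes $-1$ exactly when the letter passed is a $U$, so after $k-1$ passes the accumulated sign is $(-1)^j$ with $j$ the number of $U$'s in $\beta_{k-1}\ldots\beta_1$, and the block has collapsed to $LU$ immediately to the right of $\beta_k$.

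It remains to resolve $\beta_k$ against this residual $LU$. If $\beta_k = U$ the word contains the subword $ULU$; in the shifted setting $ULU\sim0$, since it forces two consecutive equal parts, i.e. a repeated $Y$-index, and $Y_n^2 = 0$. Hence $Y_{-\mu} = 0$. If $\beta_k = R$ then $RLU\sim U$, so the combined effect is to replace $\beta_k$ by $U$ and $L^kU$ by $L^{k-1}$, giving $Y_{-\mu} = (-1)^j Y_{-\nu}$ where $\nu$ is precisely the composition with the shifted code displayed in the statement. Since only relation \ref{shcodereln2} was used, Lemma \ref{shSameSum} then guarantees that $\nu$ is again a composition of $m$, finishing the proof.

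The step I expect to require the most care is not any single computation but keeping the index conventions of the two propositions aligned: Proposition \ref{shcodeprop} is stated with the block written as $L^{k+1}UR^k$ while Proposition \ref{codeprop} writes $L^kUR^{k-1}$, and one must check that, under this bookkeeping, exactly $k-1$ permutations collapse the block to $LU$ adjacent to $\beta_k$ and that the surviving $L$'s and $R$'s reduce to precisely the claimed code of $\nu$. The one genuinely setting-specific point, as opposed to a formal translation of the Bernstein argument, is verifying the shifted incarnation $ULU\sim0$ of the vanishing relation; everything else follows from the identification of the two relations together with Lemma \ref{shSameSum}.
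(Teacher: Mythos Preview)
Your proposal is correct and follows essentially the same route as the paper, which simply says the proof is identical to that of Theorem~\ref{BetaK}; you have spelled out exactly that transcription, correctly identifying the block $L^kUR^{k-1}$, using Proposition~\ref{shcodeprop} in place of Proposition~\ref{codeprop}, and handling the terminal $ULU\sim 0$ case via $Y_n^2=0$. Your care with the index shift between the two propositions and your justification of $ULU\sim 0$ in the shifted setting (equal consecutive parts) are precisely the points needed to make the ``identical'' claim rigorous.
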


The proof is identical to Theorem \ref{BetaK}.

\begin{corollary}\label{shCGLem}
Let $\lambda = (\lambda_1, \lambda_2, \ldots, \lambda_l)$ be a strict partition and $n$ be any integer with $n < \lambda_1$.  Let $k = \lambda_1 - n$, and let $\zeta$ be the letter $k-1$ positions left of the rightmost $U$ in the shifted code of $\lambda$.
\begin{itemize}
\item If $\zeta=U$, then $Y_{-n}Y_{-\lambda} = 0$.
\item If $\zeta = R$, then $Y_{-n}Y_{-\lambda} = (-1)^{j+1} Y_{-\nu}$, where $j$ is the number of $U$'s between the rightmost $U$ and $\zeta$, and $\nu$ is the partition obtained by replacing $\zeta$ by $U$.
\end{itemize}
\end{corollary}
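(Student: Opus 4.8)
The plan is to mirror the proof of Corollary \ref{CGLem} essentially verbatim, with Theorem \ref{shBetaK} playing the role of Theorem \ref{BetaK}, shifted codes replacing codes, and ``strict partition'' replacing ``partition.'' Concretely, I would apply Theorem \ref{shBetaK} to the composition $\mu = (n, \lambda_1, \lambda_2, \ldots, \lambda_l)$. Since $n < \lambda_1$ the first entry is an exceedance, and this is precisely what produces the leftmost run of $L$'s in the shifted code of $\mu$ that lets the theorem engage. The two bullets of the corollary should then be read directly off the two bullets of Theorem \ref{shBetaK}: the case giving the subword that forces a $U$ produces $Y_{-\mu} = Y_{-n}Y_{-\lambda} = 0$, and the other case produces $Y_{-\mu} = \pm Y_{-\nu}$ with the sign and the shifted code of $\nu$ supplied by the theorem.

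The one genuinely new ingredient, and the step I would set up first, is the relationship between the shifted code of $\mu = (n,\lambda)$ and the shifted code of $\lambda$. Starting from the ordinary-code fact used implicitly in Corollary \ref{CGLem}, namely that the code of $(n,\lambda)$ is the code of $\lambda$ with a single block inserted immediately after its rightmost $U$, I would run the defining operation (replace each $U$ by $UL$, cancel using $LR \sim \phi$, delete the $\ldots ULULU$ prefix) and track that block through. The outcome is that in the shifted code of $\mu$ the inserted block is $L^{k+1}U$ rather than $L^kU$, where $k = \lambda_1 - n$, while the remaining letters $\beta\gamma$ reproduce exactly the shifted code of $\lambda$ with $\gamma = RRR\ldots$. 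The extra $L$ is the shifted incarnation of the $k \mapsto k+1$ identification already noted between Propositions \ref{codeprop} and \ref{shcodeprop}: it comes precisely from the $UL$ attached to the rightmost $U$ of $\lambda$ during pre-shifting.

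With the shifted code of $\mu$ written as $\beta_t\ldots\beta_1 L^{k+1}U\,\gamma$, Theorem \ref{shBetaK} fires with internal run-length $k+1$, so the letter it distinguishes sits $k$ positions left of the rightmost $U$ of the shifted code of $\lambda$; this is the letter I would identify with $\zeta$. Replacing it by $U$ and cancelling the residual $L^{k}$ against the leading $R$'s of $\gamma = RRR\ldots$ via $LR \sim \phi$ then yields the shifted code of a genuine strict partition $\nu$, which by Lemma \ref{shSameSum} is a strict partition of $m$ with the same length. The sign is reconciled exactly as in Corollary \ref{CGLem}: the theorem counts the rightmost $U = \beta_1$ among its $U$'s whereas the corollary counts only the $U$'s strictly between the rightmost $U$ and $\zeta$, so the theorem's $(-1)^{j'}$ becomes the corollary's $(-1)^{j+1}$.

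The main obstacle I anticipate is precisely this index bookkeeping. Because the shifted setting is the ordinary one shifted by $k \mapsto k+1$, one must thread that offset carefully so that the letter distinguished by Theorem \ref{shBetaK} is correctly matched with $\zeta$ (the pre-shift displacing its location relative to the unshifted Corollary \ref{CGLem}), and one must handle the left boundary of the one-sided shifted code when the $\beta$-part is short. A single small example, such as $\mu = (n,\lambda_1)$ with one part, is enough to pin down the constant in the inserted $L$-run and to confirm the placement of $\zeta$ and the sign; everything downstream of this identification is the verbatim shifted transcription of the argument for Corollary \ref{CGLem}.
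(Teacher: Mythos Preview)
Your approach is exactly the paper's: apply Theorem~\ref{shBetaK} to $\mu=(n,\lambda_1,\ldots,\lambda_l)$ with $\gamma=RRR\ldots$, then read off the two bullets and reconcile the sign just as in Corollary~\ref{CGLem}. Your extra care with the $L$-run length is warranted and in fact sharper than the paper's one-line ``identical to Corollary~\ref{CGLem}'': the pre-shift turns the rightmost $U$ of $\lambda$ into $UL$, so the inserted block in the shifted code of $(n,\lambda)$ is $L^{k+1}U$ rather than $L^kU$, and Theorem~\ref{shBetaK} therefore singles out $\beta_{k+1}$, the letter $k$ positions (not $k-1$) left of the rightmost $U$ in the shifted code of $\lambda$. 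A quick check with $\lambda=(3)$, $n=1$ confirms this: the shifted code of $(1,3)$ is $RRUL^3URRR\ldots$, and replacing $\beta_3$ by $U$ yields $URURRR\ldots$, the shifted code of $(3,1)$, which is the correct $\nu$. So your proposal is sound and, in the process, surfaces an off-by-one slip in the stated position of $\zeta$; this is precisely the $k\mapsto k+1$ shift the paper itself flags after Proposition~\ref{shcodeprop} but does not propagate into the corollary's statement.
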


The proof is identical to Corollary \ref{CGLem}.  It follows directly from Theorem \ref{shBetaK} for the special case where $\mu = (n, \lambda_1, \lambda_2, \ldots, \lambda_l)$ and $\gamma = RRR\ldots$  If $n = \lambda_j$ for some $j$, then we would have $\zeta = U$ and $Y_{-\mu} = 0$.  So the case $\zeta = R$ corresponds to the \textit{strict} partition $\nu = (\lambda_1, \lambda_2, \ldots, \lambda_j, n, \lambda_{j+1}, \ldots, \lambda_l)$.

\begin{corollary}\label{shY_comp}
Let $\mu$ be any composition of $m$ with shifted code $\alpha$.  Then either $Y_{-\mu} = 0$ or $Y_{-\mu} = \pm Y_{-\lambda}$ for some strict partition $\lambda$ of $m$ with the same length as $\mu$.
\end{corollary}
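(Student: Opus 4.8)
The plan is to run the same induction used in Corollary \ref{B_comp}, since Theorem \ref{shBetaK} is the verbatim shifted-code analog of Theorem \ref{BetaK}: it takes a shifted code whose leftmost block is $L^kU$ preceded by a word $\beta$ of $R$'s and $U$'s, and either returns $0$ (when $\beta_k = U$) or rewrites $Y_{-\mu}$ as $(-1)^j Y_{-\nu}$, where the block $L^kU$ is shortened to $L^{k-1}$ and the letter $\beta_k = R$ is promoted to a $U$. I would induct on the number of $U$'s lying to the right of the leftmost $L$ in the shifted code $\alpha$, exactly as in Corollary \ref{B_comp}; equivalently, since here (unlike Theorem \ref{BetaK+J}) each application of Theorem \ref{shBetaK} shortens $L^k$ to $L^{k-1}$, one may instead induct on the total number of $L$'s in $\alpha$.

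For the base case I would observe that if no $U$ lies to the right of the leftmost block of $L$'s, then every $L$ cancels against an $R$ to its right, leaving a reduced shifted code with no $L$'s; such a shifted code is by construction that of a strict partition, so $\mu$ is already strict and there is nothing to prove. For the inductive step I would assume $\alpha$ is reduced and contains a $U$ after its leftmost $L$, write
$$\alpha = \beta_t \ldots \beta_2\beta_1 L^k U \gamma_1 \gamma_2 \gamma_3 \ldots$$
with $\beta_1 = U$ and $\beta = \beta_t \ldots \beta_1$ a word in $R$ and $U$, and apply Theorem \ref{shBetaK}. If $\beta_k = U$ the theorem gives $Y_{-\mu} = 0$ and we are finished; otherwise $Y_{-\mu} = (-1)^j Y_{-\nu}$, where the shifted code of $\nu$ has strictly fewer $U$'s to the right of its leftmost $L$ and strictly fewer $L$'s overall. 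By Lemma \ref{shSameSum}, $\nu$ is a composition of $m$ of the same length as $\mu$, so the inductive hypothesis applies and yields either $Y_{-\nu} = 0$ or $Y_{-\nu} = \pm Y_{-\lambda}$ for a strict partition $\lambda$ of $m$ of that same length; combining with the sign $(-1)^j$ gives the claim for $\mu$.

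The step I expect to require the most care is the base-case bookkeeping: checking that a reduced shifted code with no $L$'s genuinely encodes a \emph{strict} partition (rather than merely a partition, as in the Bernstein setting of Corollary \ref{B_comp}), and that the length is preserved at every stage. This is exactly where the shifted-code model pays off, since strictness is built into the shifted code, and the branch of Theorem \ref{shBetaK} with $\beta_k = U$ (forcing $Y_{-\mu} = 0$) absorbs precisely the configurations that would otherwise create a repeated part, where $Y_n^2 = 0$ compels vanishing. It is worth flagging the contrast with Corollary \ref{Y_comp}: there the ordinary-code reduction of Theorem \ref{BetaK+J} does not lower the number of $L$'s, so only the induction on $U$'s is available, whereas in the shifted setting Theorem \ref{shBetaK} behaves exactly like Theorem \ref{BetaK} and both inductions succeed.
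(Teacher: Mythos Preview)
Your proposal is correct and follows essentially the same approach as the paper, which simply states that the proof is identical to Corollary \ref{B_comp} and remarks that in the shifted setting both inductions (on the number of $L$'s, or on the number of $U$'s right of the leftmost $L$) succeed. You have even anticipated the paper's explicit contrast with Corollary \ref{Y_comp}, where only the $U$-induction is available because Theorem \ref{BetaK+J} does not decrease the $L$-count.
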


The proof is identical to Corollary \ref{B_comp}.  As in that case, we can prove the result using induction on either the number of $L$'s in $\alpha$ or on the number of $U$'s to the right of the leftmost $L$ in $\alpha$.  Notice that Corollary \ref{Y_comp} gives the exact same result as this corollary, however in that case only the latter method of proof is intuitive.  This is one example of the strength of using shifted codes to study this problem.

\begin{theorem}\label{shQThm}
For any strict partition $\lambda = (\lambda_1, \lambda_2, \ldots, \lambda_l)$,
\begin{eqnarray}
Y(t)Q_\lambda &=& \sum_{j=0}^{l}\, \sum_{n=\lambda_{j+1}+1}^{\lambda_{j}-1} \,
(-1)^j \,
t^n \,
Q_{(\lambda_1, \lambda_2, \ldots, \lambda_j, n, \lambda_{j+1} \ldots, \lambda_l)}\label{QThmA} \\
Y(t)Q_\lambda &=& \sum_{i\geq 0} \,
(-1)^{l + |\lambda| - |\lambda^{[i]}| + i} \,
t^{|\lambda^{[i]}| - |\lambda|} \,
Q_{\lambda^{[i]}} \label{QThmB}
\end{eqnarray}
where we take the convention $\lambda_0=\infty$ and $\lambda_{l+1} = -1$.
\end{theorem}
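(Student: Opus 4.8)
The statement coincides verbatim with Theorem \ref{QThm}, so the substance of the proof is a re-derivation of the same two identities carried out entirely within the shifted-code model; the plan is therefore to follow the proof of Theorem \ref{QThm} step for step, replacing each appeal to the code machinery of Section \ref{sec:schur} by its shifted-code counterpart established earlier in this section. First I would expand $Y(t)Q_\lambda = \sum_{n\in\mathbb Z} t^{-n}\,Y_{-n}(Y_{-\lambda_1}\cdots Y_{-\lambda_l}\cdot 1)$ using the definition of $Y(t)$ together with (\ref{Qlambd}), reducing everything to the evaluation of the coefficient $Y_{-n}Y_{-\lambda}$ for each integer $n$. For $n>\lambda_1$ the composition $(n,\lambda)$ is already strictly decreasing, so $Y_{-n}Y_{-\lambda}\cdot 1 = Q_{(n,\lambda)}$ with no sign, and these terms are exactly the $j=0$ summands of (\ref{QThmA}).

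For $n\le\lambda_1$ I would apply Corollary \ref{shCGLem} to the shifted code of $\lambda$ with $k=\lambda_1-n$. The corollary already records both outcomes in the form needed: $Y_{-n}Y_{-\lambda}=0$ when the selected letter $\zeta$ is a $U$, which happens precisely when $n$ equals some part $\lambda_i$ (so the vanishing agrees with that forced by $Y_{-n}^2=0$ in (\ref{Qcomm})); and otherwise $Y_{-n}Y_{-\lambda}=(-1)^{j}Q_{\nu}$ with $\nu=(\lambda_1,\ldots,\lambda_j,n,\lambda_{j+1},\ldots,\lambda_l)$, where $j$ counts the parts of $\lambda$ exceeding $n$. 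Summing over the integers $n$ in each gap $\lambda_{j+1}<n<\lambda_j$ for $j=0,1,\ldots,l$, with the conventions $\lambda_0=\infty$ and $\lambda_{l+1}=-1$, then assembles these contributions into the double sum (\ref{QThmA}).

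Finally I would pass from (\ref{QThmA}) to (\ref{QThmB}) by reindexing according to the shifted-code description of $\lambda^{[i]}$ as the strict partition obtained by changing the $i$-th $R$ from the left of the shifted code of $\lambda$ into a $U$. Since this $\lambda^{[i]}$ is the very same strict partition (the one with the $i$-th smallest missing positive integer inserted) that appears in Theorem \ref{QThm}, the substitutions $|\lambda^{[i]}|-|\lambda|=n$ and $l+|\lambda|-|\lambda^{[i]}|+i=j$ convert the double sum into the single sum over $i\ge 0$ exactly as before. I expect the main obstacle to be the positional bookkeeping inside the shifted code: verifying that ``$k-1$ positions left of the rightmost $U$'' in Corollary \ref{shCGLem} indeed selects the letter whose change inserts the part $n$ into the slot between $\lambda_j$ and $\lambda_{j+1}$, and that the exponent of $-1$ collapses to $j$. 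This is the shifted analog of the $r_i(\lambda)=\lambda_i$ count used in the proof of Theorem \ref{QThm}; the counting rule differs because the shifted code compresses the staircase, but once the correct shifted-code count is recorded the remainder of the argument transfers unchanged.
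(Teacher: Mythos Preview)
Your proposal is correct and matches the paper's approach in substance. The paper frames its proof as following Theorem \ref{CGCodeThm} (the Schur-function case) with codes replaced by shifted codes, whereas you frame it as following Theorem \ref{QThm} and swapping in the shifted-code lemmas; since the shifted-code machinery of this section (Proposition \ref{shcodeprop}, Theorem \ref{shBetaK}, Corollary \ref{shCGLem}) is built precisely to mirror the code machinery of Section \ref{sec:compos}, the two framings produce the same computation, including the bookkeeping you flag (the position of $\zeta$, the identification $j = l + |\lambda| - |\lambda^{[i]}| + i$, and the convention $\lambda_{l+1}=-1$).
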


The proof is almost identical to the proof of Theorem \ref{CGCodeThm}.  In the setting of shifted codes, we replace codes with shifted codes and $\lambda^{(i)}$ with $\lambda^{[i]}$.  This means that the number, $j$, of $U$'s in $\beta_k \ldots \beta_2 \beta_1$ will be $j = l + |\lambda| - |\lambda^{[i]}| + i$ rather than $j = |\lambda|-|\lambda^{(i)}|+i-1$, since $\lambda^{[i]} = (\lambda_1, \lambda_2, \ldots, \lambda_j, i+(l-j), \lambda_{j+1}, \ldots, \lambda_l)$ and $\lambda^{(i)} = (\lambda_1-1, \lambda_2-1, \ldots, \lambda_j-1, i-1, \lambda_{j+1}, \ldots, \lambda_l)$.  Finally, we must take the convention $\lambda_{l+1} = -1$ rather than zero in this case, since $Y_0$ represents the nontrivial insertion of zero into the strict partition, whereas $B_0$ acts as trivially on 1.

Notice that this gives the exact same result as Theorem \ref{QThm}, but the proof will follow that of the Schur function case.


Since the only difference between preshifted codes and shifted codes is the prefix $\ldots ULULU$, we can also state and prove each of these results in terms of preshifted codes, where we concern ourselves only with $L$'s not in the prefix $\ldots ULULU$.  The statements and proofs of these results will be otherwise identical to those using shifted codes.\\
\\
{\bf Acknowledgments:}
The first author would like to thank Sarah Mason and the attendees of the AMS Special Session on Algebraic and Geometric Combinatorics at Georgia Southern University (2011) for their insightful questions. The second author
gratefully acknowledges the partial support of Max-Planck Institut f\"ur Mathematik in Bonn, Simons Foundation grant 198129, and NSFC grant 10728102 during this work. The first two authors also acknowledge the partial support from NSF grants 1014554 and 1137837.


\begin{thebibliography}{ABCDE}




\bibitem[CG]{cg}S. R. Carrell and I. P. Goulden. {\it Symmetric functions, codes of
partitions and the KP hierarchy}, J. Algebraic Combin. {\bf 32} (2010), 211-226

\bibitem[DJKMO]{DJKMO} E. Date, M. Jimbo, A. Kuniba, T. Miwa, M.
Okado, {\it A new realization of the basic representation of
$A^{(1)}_n$}, Lett. Math. Phys.  \textbf{17}  (1989),  no. 1, 51--54.

\bibitem[HJS]{HJS} J. T. Hird, N. Jing, E. L. Stitzinger, {\it Codes and shifted codes of partitions}, Internat. J. Algebra Comput., to appear (arXiv:1010.4072).

\bibitem[J1]{j1} N. Jing, {\it Vertex operators, symmetric functions, and the spin Group $\Gamma_n$}, \emph{J. Algebra} \textbf{138} (1991), no. 2, 340-398.

\bibitem[J2]{j2} N. Jing, {\it Symmetric polynomials and $U_q(\widehat{sl_2})$},
\emph{Rep. Theory} \textbf{4} (2000), 46-63.

\bibitem[M]{macdonald} I. G. Macdonald,
{\it Symmetric functions and Hall polynomials},  Oxford, 1998.

\bibitem[O]{okounkov} A. Okounkov,
{\it Infinite wedge and random partitions}, Selecta Math. (N.S.)  \textbf{7}
(2001),  no. 1, 57--81.

\bibitem[Sa]{sagan} B. E. Sagan, {\it The symmetric group: representations,
combinatorial algorithms, and symmetric functions}, Springer, New
York, 1991.

\bibitem[Sch]{schur} I. Schur, {\it \"Uber die Darstellung der symmetrischen und der alternierenden
Gruppe durch gebrochene lineare Substitutionen}, J. Reine Angew.
Math. \textbf{139} (1911), 155--250.

\bibitem[St]{stanley} R. P. Stanley, {\it Enumerative combinatorics}, Vol. 2.
Cambridge University Press, 1999.

\bibitem[Ste]{stembridge} J. R. Stembridge.
{\it Shifted tableaux and the projective representations of
symmetric groups}, {Adv. Math.} \textbf{74} (1989), no. 1, 87--134.

\bibitem[Z]{zelevinsky} A. Zelevinsky, {\it Representations of finite classical groups, A Hopf
algebra approach},  LNM \textbf{869}, New York, 1981.

\end{thebibliography}
\end{document}